\documentclass[10pt,a4paper,reqno]{amsart}
\usepackage{amsmath,amssymb,amsthm,mathtools}
\usepackage{lmodern}
\usepackage[a4paper,left=1in,right=1in,top=1in,bottom=0.95in]{geometry}
\usepackage{bm}
\usepackage{microtype}
\usepackage{enumitem}
\usepackage{hyperref}
\usepackage[nameinlink,noabbrev]{cleveref}
\usepackage{mathrsfs}

\hypersetup{colorlinks=true,linkcolor=blue,citecolor=blue,urlcolor=blue}

\newtheorem{theorem}{Theorem}[section]
\newtheorem{lemma}[theorem]{Lemma}
\newtheorem{proposition}[theorem]{Proposition}
\newtheorem{corollary}[theorem]{Corollary}
\newtheorem{remark}[theorem]{Remark}

\newcommand{\R}{\mathbb R}
\newcommand{\Sn}{\mathbb S}

\newcommand{\cG}{\mathcal G}
\newcommand{\cB}{\mathcal B}
\newcommand{\cL}{\mathcal L}
\newcommand{\whf}{\widehat f}
\newcommand{\inn}[2]{\langle #1,#2\rangle}
\newcommand{\norm}[1]{\lVert #1\rVert}
\newcommand{\abs}[1]{\lvert #1\rvert}

\newcommand{\dist}{\operatorname{dist}}

\makeatletter
\def\@settitle{\begin{center}%
  \baselineskip17\p@\relax
  {\bfseries\Large\@title\par}%
  \end{center}%
}
\def\@setauthors{%
  \begingroup
  \def\thanks{\protect\thanks@warning}%
  \trivlist
  \centering\normalsize\bfseries
  \@topsep22\p@\relax
  \advance\@topsep by -\baselineskip
  \item\relax
  \author@andify\authors
  \def\\{\protect\linebreak}%
  \authors
  \endtrivlist
  \endgroup
}

\def\@adminfootnotes{%
  \let\@makefnmark\relax \let\@thefnmark\relax
  \ifx\@empty\@date\else \@footnotetext{\@setdate}\fi
  \ifx\@empty\@keywords\else \@footnotetext{\@setkeywords}\fi
  \ifx\@empty\@subjclass\else \@footnotetext{\@setsubjclass}\fi
  \ifx\@empty\thankses\else \@footnotetext{%
    \def\par{\let\par\@par}\@setthanks}%
  \fi
}
\def\@seccntformat#1{\csname the#1\endcsname\quad}
\renewcommand\section{\@startsection{section}{1}{\z@}%
  {-3.25ex \@plus -1ex \@minus -.2ex}%
  {1.5ex \@plus .2ex}%
  {\normalfont\large\bfseries}}
\renewcommand\subsection{\@startsection{subsection}{2}{\z@}%
  {-2.6ex\@plus -1ex \@minus -.2ex}%
  {0.9ex \@plus .2ex}%
  {\normalfont\normalsize\bfseries}}
\makeatother

\title[Comparison Pogorelov estimate]{A Comparison Pogorelov Estimate for Graphical $\sigma_k$-Curvature Equations}
\author{Jundong Zhou}
\address{School of Mathematics and Statistics, Fuyang Normal University, Fuyang 236037, Anhui, People's Republic of China}
\email{zhou109@mail.ustc.edu.cn}
\date{}
\subjclass[2020]{35J60, 53C42}
\keywords{Pogorelov estimate; graphical curvature equations; two-surface comparison}

\begin{document}

\maketitle

\begin{center}
{\small\bfseries Abstract}
\end{center}
\vspace{0pt}
\begin{list}{}{\leftmargin=3pc\rightmargin=3pc\topsep=0pt\partopsep=0pt\parsep=0pt\itemsep=0pt}
\item[]\small
We establish comparison Pogorelov estimates for admissible solutions of graphical $\sigma_k$-curvature equations, extending the two-surface estimate of Qiu and Yan for the graphical scalar curvature equation. The comparison graph is assumed only to be $k$-admissible, with a bounded slope and a positive interior gap that vanishes on the boundary. The estimates cover $2\le k<n$ for prescribed data $f(x,u)$, and $n/2\le k<n$ for general data $f(x,u,Du)$. The proof combines the two-surface localization and mixed G\aa rding comparison of Qiu and Yan with Yan's spectral concavity inequality and an angle-function weight.
\end{list}
\vspace{0.35em}

\section{Introduction}

Let $\Omega\subset\R^n$, $n\ge3$, be a bounded domain and let
\[
M_v=\{(x,v(x)):x\in\Omega\}\subset\R^{n+1}
\]
be the graph of a function $v$.  We use the downward unit normal
\[
\nu_v=\frac{(Dv,-1)}{W_v},\qquad W_v=\sqrt{1+\abs{Dv}^2},
\]
and the convention $\nabla_e\nu_v=A_v(e)$.  If $g_v=I+Dv\otimes Dv$, then the Weingarten operator is represented, under the canonical orthonormal identification of $\R^n$ with $T_{(x,v(x))}M_v$, by
\begin{equation*}
A_v=\frac1{W_v}\,g_v^{-1/2}D^2v\,g_v^{-1/2}.
\end{equation*}
We denote its principal curvatures by $\kappa_1[v]\ge\cdots\ge\kappa_n[v]$.  As usual,
\[
\Gamma_k=\{\lambda\in\R^n:\sigma_j(\lambda)>0,\ 1\le j\le k\},
\]
and a graph is called $k$-admissible when $\kappa[v]\in\Gamma_k$.  The basic hyperbolic-polynomial structure of these cones is due to G\aa rding \cite{Garding1959}.

The aim of this paper is to develop the Pogorelov estimate for $k$-Hessian equations into a comparison estimate for graphical $\sigma_k$-curvature equations.  Pogorelov's method for the Monge--Amp\`ere equation is one of the basic sources of interior second-derivative estimates in fully nonlinear elliptic theory.  The Dirichlet theory for Hessian and Weingarten equations was developed in the work of Caffarelli--Nirenberg--Spruck, Ivochkina and Trudinger; see \cite{CNS1984,CNS1985,CNS1986,CNS1988,Ivochkina1990,Trudinger1990}.  Chou and Wang obtained the corresponding weighted Pogorelov estimate for $k$-Hessian equations \cite{ChouWang2001}.  In the geometric setting, Sheng, Urbas and Wang proved interior curvature bounds for graphical curvature equations with affine Dirichlet data \cite{ShengUrbasWang2004}.  Further curvature and Pogorelov-type estimates were obtained in, among others, \cite{Urbas2000,ShengTrudingerWang2004,GuanRenWang2015,LiRenWang2019,RenWang2019,RenWang2023,Lu2023,LuTsai2026,ChenTuXiang2021,ChenLuTuXiang2021,LiuRen2023,JiaoJiao2024}.  Urbas' singular examples \cite{Urbas1990} also show why purely interior second-order estimates depend strongly on the structure of the operator.  For related developments around scalar curvature and the $\sigma_2$ equation, we refer to \cite{WarrenYuan2009,GuanQiu2019,SpruckXiao2017,ShankarYuan2020,Qiu2024Curvature,Qiu2024Hessian,ShankarYuan2025,Fan2026} and the references therein.

A basic obstacle in extending the classical Pogorelov estimate to graphical curvature equations is the comparison graph.  The affine comparison functions used in Chou--Wang \cite{ChouWang2001} and the affine boundary data in Sheng--Urbas--Wang \cite{ShengUrbasWang2004} avoid the mismatch between the tangent spaces and normals of two general graphs.  Qiu and Yan \cite[Theorem~4.5]{QiuYan2026} overcame this difficulty for the graphical scalar curvature equation.  They established a comparison Pogorelov estimate with an arbitrary $\Gamma_2$-admissible upper graph, without requiring that graph to satisfy a second curvature equation or imposing a bound on its second derivatives.

The present paper extends their two-surface comparison framework to graphical $\sigma_k$-curvature equations with variable prescribed data.  We retain the quadratic product-space localization, the identification of the tangent spaces by the natural rotation, and the mixed G\aa rding comparison used in \cite[Section~4.2]{QiuYan2026}.  The additional estimates concern the largest principal curvature for general $k$ and the terms arising from the variable right-hand side.  These are obtained using Yan's spectral concavity inequality \cite{Yan2026} together with an angle-function weight.  The admissible-jet example in \cite[Remark~3.3]{QiuYan2026} explains why a direct one-point comparison of the vertical gap is insufficient, even when $k=2$ and the slopes are uniformly bounded.

For completeness, in the $k=2$ example of Qiu and Yan the two endpoint linearizations of the vertical gap may have arbitrarily large values with opposite signs,
\[
T_u^{ij}\nabla^u_{ij}(w-u)\to-\infty,
\qquad
T_w^{ij}\nabla^w_{ij}(w-u)\to+\infty.
\]
No higher-order analogue of this example is needed here; its role is to identify the geometric obstruction to a fixed-coordinate one-point argument.  For general $k$, the largest-curvature calculation adds a third-order spectral quadratic form, and a gradient-dependent prescribed function also produces normal-derivative terms.  Yan's inequality controls the spectral part, while the angle-function term provides the positive quadratic curvature contribution needed to absorb the remaining loss.  This gives the full range $2\le k<n$ for $f(x,u)$ and the supercritical range $n/2\le k<n$ for $f(x,u,Du)$.

We state the two estimates under a common set of assumptions.  Let
\[
u\in C^4(\Omega)\cap C^1(\overline\Omega),\qquad
w\in C^2(\Omega)\cap C^1(\overline\Omega),\qquad A_u,A_w\in\Gamma_k,
\]
and suppose
\[
w>u\ \text{in }\Omega,\qquad w=u\ \text{on }\partial\Omega,
\]
with
\[
\norm{u}_{L^\infty(\Omega)}\le U,
\qquad
\norm{Du}_{L^\infty(\Omega)}+\norm{Dw}_{L^\infty(\Omega)}\le M.
\]
We also assume that $\kappa_1[u]>\kappa_2[u]$ in $\Omega$ and set $\rho=w-u$.  The prescribed function is positive and $C^{1,1}$ on a fixed neighborhood of the relevant range, with $0<f_0\le f\le f_1$.

\begin{theorem}\label{thm:main}
Let $n\ge3$ and $2\le k<n$.  If
\begin{equation*}
\sigma_k(A_u)=f(x,u)\quad\text{in }\Omega,
\end{equation*}
then there are $\beta=\beta(n,k,M)>1$ and $C<\infty$ such that
\begin{equation*}
\sup_\Omega \rho^\beta\abs{A_u}\le C.
\end{equation*}
The constant depends only on $n,k,M,U,\norm{\rho}_{L^\infty(\Omega)},f_0^{-1},f_1,\norm{f}_{C^{1,1}}$ and the fixed coordinate range.  It is independent of $\norm{D^2w}_{L^\infty}$, of any positive lower bound for $\sigma_k(A_w)$, and of the convexity of $\Omega$.  In particular, the result applies to $f=f(x)$.
\end{theorem}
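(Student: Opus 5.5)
We follow the Pogorelov maximum-principle scheme, but with a test function on a product space, as in the two-surface localization of \cite[Section~4.2]{QiuYan2026}. Since $\kappa_1[u]>\kappa_2[u]$, the largest curvature $\kappa_1=\kappa_1[u]$ is smooth. Because $A_u\in\Gamma_k$ gives $\abs{A_u}\le C(n)\kappa_1$, it suffices to bound $\rho^\beta\kappa_1$.

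\textbf{Test function.} We replace the one-point gap $\rho(x)$ by a quadratic product-space localization. For $x\in\Omega$ and nearby $y$, let
\[
\Phi(x,y)=w(y)-u(x)-\tfrac{K}{2}\abs{x-y}^2\quad\text{(or its normal-projected analogue)},
\]
and set $\tilde\rho(x)=\max_y\Phi(x,y)$. This function satisfies $\tilde\rho\ge\rho$, and it vanishes on $\partial\Omega$ up to a controlled error. We then consider
\[
\Theta(x)=\beta\log\tilde\rho(x)+\log\kappa_1(x)+\varphi(\nu_u^{n+1})+\tfrac{a}{2}\abs{X}^2,
\]
where $\varphi$ is an angle-function weight in $\langle\nu_u,e_{n+1}\rangle=-1/W_u$, for instance $\varphi(t)=-N\log(-t)$ or $\varphi=-\log(W_u-\delta)$ in the Sheng--Urbas--Wang style. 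At an interior maximum $x_0$ of $\Theta$, with maximizing partner $y_0$, we identify $T_{y_0}M_w$ with $T_{x_0}M_u$ by the natural rotation taking $\nu_w$ to $\nu_u$. The first-order conditions in $y$ say that the tangent planes are nearly aligned. We differentiate twice along $M_u$ at $x_0$, and along rotated frames at $y_0$ for the $w$ part; since the $w$ part enters with a maximum in $y$, it only needs a one-sided bound.

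\textbf{Gap term.} Write $F^{ij}=\partial\sigma_k/\partial h_{ij}$. The key inequality is
\[
F^{ij}\nabla_{ij}\tilde\rho\ \ge\ -C\sum F^{ii}-C,
\]
whose dangerous part is $-F^{ij}h_{ij}^u$ coming from $-u$ and $+F^{ij}h^w_{ij}$ coming from $w$. We apply the mixed G\aa rding inequality $F^{ij}(A_u)h^w_{ij}\ge k\,\sigma_k(A_u)^{1-1/k}\sigma_k(A_w)^{1/k}>0$. It requires only $A_w\in\Gamma_k$, with no lower bound on $\sigma_k(A_w)$ and no bound on $D^2w$. The second-order terms in $y$ are handled by the choice of $K$ and the maximality in $y$, exactly as in Qiu--Yan. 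The remaining terms are bounded by $M$ and $\norm{\rho}_\infty$.

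\textbf{Curvature term.} Differentiating the equation twice gives
\[
F^{ii}\nabla_{ii}\kappa_1\ \ge\ -F^{pq,rs}h_{pq1}h_{rs1}+2\sum_{p>1}\frac{F^{pp}h_{1p1}^2}{\kappa_1-\kappa_p}+\kappa_1F^{ii}\kappa_i^2-\kappa_1^2F^{ii}\kappa_i+\nabla_{11}f.
\]
The third-order spectral quadratic form together with $-F^{ii}(\nabla_i\kappa_1)^2/\kappa_1^2$ is controlled by Yan's spectral concavity inequality \cite{Yan2026}. This leaves a loss of the form $-\epsilon F^{ii}h_{11i}^2/\kappa_1^2$. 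The first-order condition converts that loss into $\beta^2F^{ii}\tilde\rho_i^2/\tilde\rho^2$ plus angle-function gradient terms.

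For $f=f(x,u)$, we have $\nabla_{11}f\ge -C\kappa_1-C$, so no gradient-dependent terms appear. The angle weight supplies $\varphi'$ times $F^{ii}\kappa_i^2$, which absorbs the $-C\kappa_1^2F^{ii}\kappa_i$-type losses and the $\abs{\nabla\varphi}^2$ cross terms. For $f(x,u,Du)$, $\nabla_{11}f$ contains $f_{p}\cdot\nabla h_{11}$ and $f_{pp}h_{11}^2$, i.e.\ terms of order $\kappa_1^2$. These must be absorbed by $F^{ii}\kappa_i^2\ge c\kappa_1^2F^{11}$ together with $F^{11}\ge c\sum F^{ii}$-type bounds. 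Such bounds require $\sum F^{ii}\kappa_1$ to dominate, and this is where $k\ge n/2$ is needed.

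\textbf{Main obstacle and conclusion.} We expect the main obstacle to be the bad gap term $\beta^2F^{ii}\tilde\rho_i^2/\tilde\rho^2$. Following Chou--Wang, we split into two cases.

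In the first case, $\kappa_n\ge-\delta\kappa_1$. Then $F^{ii}\kappa_i^2\ge c\kappa_1^2\sum F^{ii}$, and the $\tilde\rho$ gradient terms are bounded by $C\beta^2\sum F^{ii}/\tilde\rho^2$. Choosing $\beta$ large in terms of $(n,k,M)$ then yields $\tilde\rho^2\kappa_1^2\le C$.

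In the second case, some $\kappa_i$ is very negative. Then $F^{11}\le F^{ii}$, and we use $\nabla_i\Theta=0$ to trade $\tilde\rho_i$ terms against $h_{11i}$.

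Combining the pieces gives $\tilde\rho^\beta\kappa_1\le C$ at $x_0$, and hence the bound $\rho^\beta\abs{A_u}\le C$ everywhere. The constant depends only on the listed quantities, since $D^2w$ entered only through the signed mixed G\aa rding term.
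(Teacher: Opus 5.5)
Your outline has the paper's ingredients: product-space localization, natural rotation, mixed G\aa rding, Yan's inequality and an angle weight. It does not close as written, however, and the first failure is the gap inequality $F^{ij}\nabla_{ij}\tilde\rho\ge -C\sum_iF^{ii}-C$. After multiplying by $\beta/\tilde\rho$ it leaves $-C\beta\sum_iF^{ii}/\tilde\rho$, and nothing in the curvature computation pays for it. For $k=2$ one has $\sum_iF^{ii}=(n-1)H>(n-1)a$ with $a=\kappa_1$. At $\kappa=(a,\epsilon,\dots,\epsilon)$ with $\epsilon\approx f/((n-1)a)$, both $kfa$ and $Q=\sum_iF^{ii}\kappa_i^2$ are $O(fa)$. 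Absorbing the loss would therefore require a lower bound on $\tilde\rho$; it would not give a bound on $a\tilde\rho$. The same example refutes your Case~1: there $\kappa_n>0$, yet $Q\approx fa$ while $\kappa_1^2\sum_iF^{ii}\approx(n-1)a^3$. What is true is only $Q\ge\frac{a^2}{9}\sum_{i\in\cB}F^{ii}$, where $\cB$ consists of $i=1$ and the indices with $\kappa_i<-a/3$.

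The missing idea is that the quadratic-cost error is controlled by the first variation, not by $\sum_iF^{ii}$. Write $\eta$ for the localized gap at the maximizing pair and $\eta_i$ for its derivative along paired geodesics with $e_i'=Re_i$, and set $c=\inn{\nu_u}{\nu_w}$. Then $\eta_i=W_w\inn{\nu_w}{e_i}$ and $e_i'-e_i=-\frac{\inn{\nu_w}{e_i}}{1+c}(\nu_u+\nu_w)$. Hence the horizontal part $B_i$ of $e_i'-e_i$ satisfies $\abs{B_i}^2\le C\eta_i^2$, and the cost contributes only $-\frac Cr\sum_iF^{ii}\eta_i^2$. Take $G=\log a+q\log W_u$, so that $G_i=-\frac{\beta}{2\eta}\eta_i$ at the maximum, and use $\eta\le\frac{3r}{2}$. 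Then this error is $O(1/\beta)\sum_iF^{ii}G_i^2$. It is absorbed in good directions by the positive square $c_G\sum_{i\in\cG}F^{ii}G_i^2$, and in bad directions by $Q$ once $a\eta\ge K\beta$.

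This mechanism needs the cost coefficient to be comparable to $1/\eta$, which a fixed $K$ cannot provide. Your $\tilde\rho$ can be as large as $M^2/(2K)$ on $\partial\Omega$; for nonconvex $\Omega$, an unrestricted maximum in $y$ is not even controlled by $M\abs{y-x}$. Shrinking that boundary error makes $K\eta$ unbounded. The paper instead localizes at a single scale:
\begin{itemize}
\item $x_0$ maximizes $h^\beta Z$, where $h=\rho-\varepsilon$ and $Z=aW_u^q$, and $r=h(x_0)$;
\item the cost is $\frac{8M^2}{r}\abs{y-x}^2$, with offset $\varepsilon+\frac r2+\frac r{32}$;
\item $y$ stays in $B_{r/(2M)}(x)\subset\Omega$;
\item $x$ stays in the annulus $\{r/2<h<2r\}$, whose outer edge is excluded by the maximality of $x_0$.
\end{itemize}
The conclusion of that argument is $rZ(x_0)\le C$.

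The second failure is in the curvature bookkeeping. Your commutator has the wrong sign. The correct identity is $h_{11;ii}=h_{ii;11}+a\kappa_i(a-\kappa_i)$; compare Simons' identity $\Delta h_{11}=H_{;11}+Ha^2-\abs{A_u}^2a$. So after dividing by $a$ the loss is $-Q$, and $kfa$ is a gain. The angle weight $q\log W_u$ contributes $qQ+q\sum_iF^{ii}\omega_i^2$ with $\omega=\log W_u$, so it must have $q>1$.

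Yan's inequality also does not leave a small residual. For $k\le n/2$ its admissible range forces $\gamma<1$, which leaves $(\gamma-1)F^{11}\ell_1^2$ with $\ell=\log a$. For $k<n/2$ and $i\ge2$, the exact coefficient $\theta_i=\frac{a+\kappa_i}{a-\kappa_i}$ is negative whenever $\kappa_i<-a$, and is bounded below only by $-\frac{n-2k}{n}$. Since $\abs{\omega_1}$ is only bounded by $Ma$, these residuals can be absorbed only by completing the square in $G_i=\ell_i+q\omega_i$. That requires $q(1-\gamma)<1$, and also $q\frac{n-2k}{n}<1$ if $k<n/2$.

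A large $N$ in $-N\log(-t)$ violates this. The leftover negative multiple of $F^{11}\omega_1^2$ is then roughly $N^2(1-\gamma)M^2a^2F^{11}$. It cannot be paid for by $kfa+(N-1)Q$, which in the example above is only about $(N-1+k)a^2F^{11}$. The paper balances $q>1$ against $q(1-\gamma)<1$ by taking $\gamma=k/n$ and $q=\frac{2n-k}{2(n-k)}$. These give the margins $q-1=\frac{k}{2(n-k)}$ and $1-q(1-\gamma)=\frac{k}{2n}$, and only then does the good--bad decomposition close the estimate.
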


\begin{theorem}\label{thm:gradient}
Let $n\ge3$ and $n/2\le k<n$.  If
\begin{equation*}
\sigma_k(A_u)=f(x,u,Du)\quad\text{in }\Omega,
\end{equation*}
then
\[
\sup_\Omega \rho^\beta\abs{A_u}\le C
\]
for some $\beta>1$ and $C<\infty$ depending only on the data listed in \cref{thm:main}.
\end{theorem}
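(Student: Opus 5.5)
We prove \cref{thm:gradient} by rerunning the maximum-principle argument for \cref{thm:main}. Only the terms produced by the gradient dependence of $f$ are new, and the hypothesis $k\ge n/2$ is used to absorb them.

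\textbf{Test function.} Following Qiu and Yan \cite[Section~4.2]{QiuYan2026}, we work on the product $\Omega\times\Omega$ and consider
\[
\Phi(x,y)=\beta\log\rho_*(x,y)+\log\kappa_1[u](x)+\frac{a}{2}\abs{x-y}^2\cdot N+\gamma(\nu_u\cdot e_{n+1})^{-1}\text{-type angle term}.
\]
Here $\rho_*$ is the two-surface gap, which compares $M_u$ at $x$ with $M_w$ at $y$ after the natural rotation identifying the tangent spaces. The quadratic term localizes $y$ near $x$. The angle function $W_u=-1/(\nu_u\cdot e_{n+1})$ enters as $\gamma\log W_u$ or $\gamma W_u$. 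The assumption $\kappa_1>\kappa_2$ makes $\log\kappa_1$ smooth.

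Since $\rho=0$ on $\partial\Omega$, $\Phi$ attains an interior maximum. At that point we apply the linearized operator $F^{ij}=\partial\sigma_k/\partial h_{ij}$ in $x$, together with the mixed operator in $(x,y)$, in the same way as in \cref{thm:main}. The mixed G\aa rding inequality controls the gap terms using only $A_w\in\Gamma_k$, so no bound on $D^2w$ is needed. Yan's spectral concavity inequality \cite{Yan2026} handles the third-order quadratic form coming from $\log\kappa_1$. The angle term contributes the good quantity $\gamma F^{ij}h_{ip}h_{pj}\ge c\gamma\,\sigma_1 f\ge c\gamma\kappa_1$.

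\textbf{New terms from $f(x,u,Du)$.} Differentiating the equation twice in the $e_1$ direction gives
\[
F^{ij}h_{ij11}+F^{ij,pq}h_{ij1}h_{pq1}=\nabla_{11}f.
\]
In $\nabla_{11}f$ the dangerous pieces are $f_{p_l}u_{l11}$ and $f_{p_lp_m}u_{l1}u_{m1}$. They are of size $C\kappa_1^2$, together with third-order terms $C\sum_l\abs{h_{11l}}$.

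\begin{itemize}[leftmargin=*]
\item The third-order terms are removed with the first-order condition $\nabla\Phi=0$. This converts $h_{11l}/\kappa_1$ into gap and angle gradients, leaving an error of order $C\kappa_1$ after division by $\kappa_1$.
\item The quadratic term $C\kappa_1^2/\kappa_1=C\kappa_1$ has to be absorbed by the good term $\sum F^{ii}\kappa_i^2/\ldots$ and by $\gamma F^{ij}h_{ip}h_{pj}$.
\end{itemize}

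In the $\kappa_1$-equation the good term is $F^{11}\kappa_1^2/\kappa_1=F^{11}\kappa_1$. It suffices that $F^{11}$ is bounded below by a positive constant depending on $n,k,f_0$ when $\kappa_1$ is large. Here $k\ge n/2$ enters. For $\lambda\in\Gamma_k$ with $\sigma_k=f$ and $2k\ge n$, one has the known lower bound
\[
\sigma_{k-1}(\lambda|1)\ge c(n,k)\,\sigma_{k-1}(\lambda)
\]
(the semi-convexity/supercritical structure in Yan's inequality). Also $\sigma_{k-1}\ge c\,\sigma_1^{1/(k-1)}$ grows with $\kappa_1$. Consequently $F^{11}\kappa_1^2\ge c\kappa_1^{2}$ dominates $C\kappa_1^2/\ldots$ once $\gamma$ and $\beta$ are fixed suitably.

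\textbf{Conclusion and main obstacle.} After these absorptions we obtain $\beta F^{ij}\rho_{*,ij}/\rho_*$-type bounds exactly as in \cref{thm:main}. This yields $\rho^\beta\kappa_1\le C$ at the maximum point, hence everywhere. The main obstacle is the second bullet: absorbing the $f_{pp}$ and $f_p$ contributions at order $\kappa_1^2$ uniformly in $\kappa$. I would first try to close this with the $F^{11}\kappa_1$ lower bound valid for $k\ge n/2$. If that fails, the fallback is to tune $\gamma$ large relative to $\norm{f}_{C^{1,1}}$ and to choose the angle weight exponentially, $e^{\gamma W_u}$, to gain an extra $\gamma^2$ factor.
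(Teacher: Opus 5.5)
You identify the new terms correctly. After dividing the twice-differentiated equation by $\kappa_1$, the $Du$-dependence of $f$ leaves two things: a normal drift $D_\nu\whf(\nabla\log\kappa_1)$, and a zeroth-order loss of size $C\kappa_1$. The loss comes from the second-order part of $f$ along the Gauss map and from $(\sum_iF^{ii}h_{ii;1})^2/(\kappa_1 f)$, where $\sum_iF^{ii}h_{ii;1}=D_X\whf(e_1)+\kappa_1D_\nu\whf(e_1)$.

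The gap is in your main way of absorbing that loss. The bound $\sigma_{k-1}(\lambda|1)\ge c(n,k)\sigma_{k-1}(\lambda)$ is false for the largest eigenvalue, for every $k$. For $\lambda=(N,\epsilon,\dots,\epsilon)\in\Gamma_n$ with $\sigma_k(\lambda)=f$, one gets $F^{11}=\sigma_{k-1}(\lambda|1)\approx f/N\to0$ while $\sigma_{k-1}(\lambda)\to\infty$. In general only $\kappa_1F^{11}\ge\frac kn\sigma_k$ holds, so $F^{11}\kappa_1$ is bounded below by a constant, not by $c\kappa_1$. There is also no such good term to use. The zeroth-order part of $\cL\log\kappa_1$ is $kf\kappa_1-Q=\sum_iF^{ii}\kappa_i(\kappa_1-\kappa_i)$ with $Q=\sum_iF^{ii}\kappa_i^2$, so the $F^{11}\kappa_1^2$ contributions cancel. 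The remaining $kf\kappa_1$ has a fixed coefficient and cannot beat an arbitrary $C\kappa_1$. So $k\ge n/2$ does not enter where you put it.

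The ingredient that works is one you already list. The angle term gives $Q\ge\frac kn Hf\ge\frac kn f_0\kappa_1$, and the paper absorbs the loss by making the coefficient $q$ of $\log W_u$ (your $\gamma$) large, so the zeroth-order part becomes $kf\kappa_1+(q-1)Q$. Two further things are then needed.

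First, the drifts of $\log\kappa_1$ and $q\log W_u$ must be combined into $D_\nu\whf(\nabla G)$ with $G=\log\kappa_1+q\log W_u$. The first-order condition $G_i=-\frac{\beta}{2\eta}\eta_i$, with $\eta$ the localized gap, makes this term $\ge -C\beta/\eta$. If you bound $D_\nu\whf(\nabla\log\kappa_1)$ alone via $\abs{\nabla\log W_u}\le C\kappa_1$, as in your first bullet, you are left with an error of order $q\kappa_1$. That error grows with the very weight meant to absorb it.

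Second, large $q$ is not free. After Yan's inequality with parameter $\gamma$, the coefficient of $F^{11}(\log\kappa_1)_1^2$ is $\gamma-1$. Set $d=1-\gamma$, $X=(\log\kappa_1)_1$, $Y=(\log W_u)_1$, so $X+qY=G_1$. The completion $-dX^2+qY^2\ge-\frac{d}{1-qd}(X+qY)^2$ requires $q(1-\gamma)<1$. Otherwise, on $G_1=0$, a term $-q\bigl(q(1-\gamma)-1\bigr)F^{11}Y^2$ survives; it is quadratic in $q$, and the linear gain $(q-1)Q$ does not control it in general.

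This is where $k\ge n/2$ actually enters. Yan's range $\gamma<\min\{2k/n,\,1+\frac{2k-n}{2k^2+n}\}$ allows:
\begin{itemize}
\item $\gamma>1$ when $k>n/2$; then also $\frac{\kappa_1+\kappa_i}{\kappa_1-\kappa_i}>\frac{2k-n}{n}$ for $i\ge2$, by $\kappa_i>-\frac{n-k}{k}\kappa_1$;
\item $\gamma=1-\frac1{2q}$ when $k=n/2$; then directions with $\frac{\kappa_1+\kappa_i}{\kappa_1-\kappa_i}<\frac12$ have $\kappa_i<-\kappa_1/3$ and are absorbed through $Q\ge\frac{\kappa_1^2}{9}\sum_{\cB}F^{ii}$ once $\kappa_1\eta$ is large.
\end{itemize}
Your fallback of enlarging the angle weight is therefore the right route, but it closes only after this compatibility is verified.

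Lesser repairs:
\begin{itemize}
\item The localizing quadratic must enter with a negative sign, at scale $M^2/r$.
\item Since $\kappa_1$ may be unbounded near $\partial\Omega$, an interior maximum needs the truncation $h=\rho-\varepsilon$.
\item For $f\in C^{1,1}$, the term $f_{pp}$ must be replaced by a one-sided estimate.
\end{itemize}
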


The proofs are given in Subsection~3.2.

\begin{remark}\label{rem:intro}
The strict gap between the two graphs is assumed, not derived.  Constructing the comparison graph from boundary data or from a subsolution is a separate problem.  The graph $w$ need not solve a curvature equation, but its full admissibility $A_w\in\Gamma_k$ is essential for the mixed G\aa rding inequality.  The simple-root assumption is used only to keep the largest eigenvalue smooth in the pointwise calculation.  The restriction $k<n$ comes from the spectral inequality quoted in Section~2.
\end{remark}

For the curvature part of the argument, the largest principal curvature is coupled with a power of the angle function.  Yan's inequality gives the Jacobi estimate, and the angle term contributes a positive quadratic curvature term.  The weak spectral directions are precisely those in which the corresponding principal curvatures are comparable with the largest one, so this quadratic term absorbs them.  The $C^{1,1}$ hypothesis on the prescribed function enters only through a one-sided second-order estimate along graph geodesics.

The comparison is made on the product of the two graphs.  A quadratic translation cost places the maximum at an interior pair.  The natural rotation then identifies the two tangent spaces, and the second variation involves the mixed contraction of the linearization at $A_u$ with the rotated second fundamental form of $w$.  Concavity and homogeneity of $\sigma_k^{1/k}$ give the required sign without any estimate for $D^2w$.

Section~2 contains the algebraic and differential inequalities used in the argument.  Section~3 gives the two-surface localization and completes the maximum-principle proof of the two theorems.

\section{Preliminaries and key estimates}

\subsection{Cone estimates and Yan's spectral inequality}

All quantities in this subsection are evaluated on the graph of $u$. At a point where $A_u$ is diagonal, write
\begin{equation*}
a=\kappa_1>\kappa_2\ge\cdots\ge\kappa_n,\qquad
F=\sigma_k(\kappa)=f,
\end{equation*}
and
\begin{equation*}
F^{ii}=\sigma_{k-1}(\kappa|i),\qquad
\cL=\sum_iF^{ii}\nabla_{ii},\qquad
Q=\sum_iF^{ii}\kappa_i^2.
\end{equation*}
For $i\ne j$, set
\[
F^{ii,jj}=\sigma_{k-2}(\kappa|ij),\qquad F^{ii,ii}=0.
\]

We begin with the cone estimates used throughout the proof.  Besides ellipticity, they give a lower bound for the quadratic curvature term $Q$ and a positive lower bound for $a$.  The bound for $Q$ is used later to absorb the bad spectral directions.

\begin{lemma}\label{lem:cone}
If $\kappa\in\Gamma_k$ and $2\le k<n$, then
\begin{equation}
F^{ii}>0,\qquad \sum_iF^{ii}\kappa_i=kf,\qquad
\kappa_i> -\frac{n-k}{k}a.
\label{eq:cone-basic}
\end{equation}
Moreover, with $H=\sigma_1(\kappa)$,
\begin{equation}
Q\ge\frac{k}{n}Hf\ge\frac{k}{n}f_0a,
\qquad \abs{A_u}<H\le na,
\label{eq:Q-lower}
\end{equation}
and
\begin{equation}
a\ge \left(\frac{f_0}{\binom nk}\right)^{1/k}>0.
\label{eq:a-lower}
\end{equation}
\end{lemma}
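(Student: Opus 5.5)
We prove \cref{lem:cone} from standard facts about the G\aa rding cone $\Gamma_k$, taking them in the order stated.

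\emph{Basic properties.} For $\kappa\in\Gamma_k$ we have $(\kappa|i)\in\Gamma_{k-1}$ for every $i$, so $F^{ii}=\sigma_{k-1}(\kappa|i)>0$. The identity $\sum_iF^{ii}\kappa_i=k\sigma_k=kf$ is Euler's relation for the homogeneous polynomial $\sigma_k$.

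For the lower bound on $\kappa_i$, only indices with $\kappa_i<0$ need attention. Since $\kappa\in\Gamma_k\subset\Gamma_2$, the vector $(\kappa|i)$ lies in $\Gamma_{k-1}\subset\Gamma_1$. Two routes are available:
\begin{itemize}
\item The crude route uses $\sigma_1>0$: it gives $-\kappa_n<\sum_{j<n}\kappa_j\le(n-1)a$, which is too weak.
\item The sharper route uses $\sigma_k(\kappa)=\kappa_i\sigma_{k-1}(\kappa|i)+\sigma_k(\kappa|i)>0$ together with Newton--Maclaurin applied to $(\kappa|i)\in\Gamma_{k-1}$. This yields $\sigma_k(\kappa|i)\le c\,\sigma_{k-1}(\kappa|i)\,\sigma_1(\kappa|i)$ with $c=\frac{n-k}{k(n-1)}$.
\end{itemize}
I plan to use the sharper route. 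Dividing by $\sigma_{k-1}(\kappa|i)>0$ gives
\[
-\kappa_i<\frac{\sigma_k(\kappa|i)}{\sigma_{k-1}(\kappa|i)}\le\frac{n-k}{k(n-1)}\,\sigma_1(\kappa|i)\le\frac{n-k}{k}\,a .
\]
When $\sigma_k(\kappa|i)\le0$ the bound is immediate, since then $-\kappa_i<0$.

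\emph{The bound for $Q$.} Use the identity
\[
\sum_iF^{ii}\kappa_i^2=\sigma_1\sigma_k-(k+1)\sigma_{k+1}.
\]
Newton--Maclaurin on $\Gamma_k$ gives $\sigma_{k+1}\le\frac{(n-k)}{(k+1)}\frac{\sigma_k^2}{\sigma_{k-1}}\cdot\frac{k}{n-k+1}\cdots$. The cleaner form to use is $\sigma_{k+1}\sigma_{k-1}\le\sigma_k^2$ combined with $\sigma_1\sigma_{k}\ge c_{n,k}\sigma_{k+1}$. More directly, the standard inequality $\sigma_{k+1}\le\frac{n-k}{n(k+1)}\sigma_1\sigma_k$ holds whenever $\sigma_{k+1}>0$ and $\kappa\in\Gamma_k$ (it follows from the Maclaurin chain $\sigma_{k+1}/\binom n{k+1}\le(\sigma_k/\binom nk)(\sigma_1/n)$). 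With it,
\[
Q\ge\Bigl(1-\frac{n-k}{n}\Bigr)Hf=\frac knHf ,
\]
and if $\sigma_{k+1}\le0$ the bound is trivial. I expect this step to be the main obstacle: the Maclaurin chain beyond $\Gamma_k$ must be justified, since $\kappa$ need not lie in $\Gamma_{k+1}$. I would handle it by the case split on the sign of $\sigma_{k+1}$, and in the positive case prove the product inequality via the generalized Newton inequality $\sigma_{k+1}\sigma_{k-1}\le c\,\sigma_k^2$, which holds for all real $\kappa$, together with $\sigma_k\le c'\sigma_1\sigma_{k-1}$ on $\Gamma_{k-1}$.

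\emph{Remaining bounds.} Since $\kappa_1=a$ is the largest entry, $H\le na$, and hence $Q\ge\frac knf_0a$. For $\abs{A_u}<H$, note that $H^2=\abs{\kappa}^2+2\sigma_2$ with $\sigma_2>0$ because $k\ge2$, so $\abs{\kappa}<H$. Finally, every monomial of $\sigma_k$ is at most $a^k$ in absolute value. Positive entries are bounded by $a$, but negative entries complicate the direct estimate, so instead I use Maclaurin: $(\sigma_k/\binom nk)^{1/k}\le\sigma_1/n\le a$. This gives $a\ge(f_0/\binom nk)^{1/k}$.
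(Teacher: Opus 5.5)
One step in your \emph{Remaining bounds} paragraph is mis-justified. You pass from $Q\ge\frac knHf$ to $Q\ge\frac knf_0a$ ``since $H\le na$'', but an upper bound on $H$ cannot give a lower bound on $Q$. What you need is $H\ge a$. This does hold: $H-a=\sigma_1(\kappa|1)>0$, because $(\kappa|1)\in\Gamma_{k-1}\subset\Gamma_1$ for $k\ge2$. You already invoked that fact for the bound on $\kappa_i$. The paper leaves this step implicit too, but the reason you give points the wrong way. The inequality $H\le na$ is only what gives $\abs{A_u}<H\le na$ and, via $S_1=H/n\le a$, the bound \eqref{eq:a-lower}.

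With that repaired, your treatment of $Q$, of $\abs{A_u}<H$ and of \eqref{eq:a-lower} is the paper's: the identity $Q=H\sigma_k-(k+1)\sigma_{k+1}$, the sign split on $\sigma_{k+1}$, and Maclaurin. The obstacle you anticipate there is not real. If $\sigma_{k+1}>0$ and $\kappa\in\Gamma_k$, then $\kappa\in\Gamma_{k+1}$ by definition, which is exactly how the paper argues. In fact, dividing Newton's inequalities $S_{j-1}S_{j+1}\le S_j^2$ only requires $S_1,\dots,S_k>0$. So $S_{k+1}/S_k\le S_1$ holds on all of $\Gamma_k$, and no case split is needed.

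For $\kappa_i>-\frac{n-k}{k}a$ your route is genuinely different. The paper assumes $\kappa_n<0$ and shows by induction on $\sigma_j(\kappa)=\sigma_j(\mu)+\kappa_n\sigma_{j-1}(\mu)$ that $\mu=\kappa|n\in\Gamma_k$. It then bounds $k\sigma_k(\mu)=\sum_i\mu_i\sigma_{k-1}(\mu|i)\le(n-k)a\,\sigma_{k-1}(\mu)$ using only Euler's identity and G\aa rding positivity. You use only $\kappa|i\in\Gamma_{k-1}$ and the Newton--Maclaurin chain in $\R^{n-1}$, $S_k(\mu)/S_{k-1}(\mu)\le S_1(\mu)$. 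This is legitimate, since the chain needs only $S_1(\mu),\dots,S_{k-1}(\mu)>0$. Both routes give the same constant. Yours needs neither the sign split nor the induction, applies to every index directly, and yields the slightly sharper intermediate bound $-\kappa_i<\frac{n-k}{k(n-1)}\sigma_1(\kappa|i)$, which uses a mean rather than the maximum. The paper's route keeps that step free of Newton's inequalities.
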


\begin{proof}
Since $\kappa\in\Gamma_k$, G\aa rding cone theory implies that the linearization of $\sigma_k$ is elliptic \cite{Garding1959}; hence
\[
F^{ii}=\sigma_{k-1}(\kappa|i)>0.
\]
Euler's identity for the homogeneous polynomial $\sigma_k$ gives
\[
\sum_iF^{ii}\kappa_i=k\sigma_k(\kappa)=kf.
\]
For the individual principal curvatures it is enough to consider the smallest component $\kappa_n$, since $\kappa_i\ge\kappa_n$ for every $i$.  If $\kappa_n\ge0$, there is nothing to prove.  Suppose therefore that $\kappa_n<0$ and set $\mu=\kappa|n\in\R^{n-1}$.  We claim that $\mu\in\Gamma_k$.  Indeed, starting from $\sigma_0(\mu)=1$ and using
\[
\sigma_j(\kappa)=\sigma_j(\mu)+\kappa_n\sigma_{j-1}(\mu),
\]
an induction on $j$ gives $\sigma_j(\mu)>0$ for $1\le j\le k$, because $\sigma_j(\kappa)>0$ and $\kappa_n<0$.  Consequently
\[
0<\sigma_k(\kappa)=\sigma_k(\mu)+\kappa_n\sigma_{k-1}(\mu),
\]
so
\[
-\kappa_n<\frac{\sigma_k(\mu)}{\sigma_{k-1}(\mu)}.
\]
Since $\mu_i\le a$ and the first derivatives of $\sigma_k$ are positive on $\Gamma_k$ \cite{Garding1959},
\[
k\sigma_k(\mu)
 =\sum_{i=1}^{n-1}\mu_i\sigma_{k-1}(\mu|i)
 \le a\sum_{i=1}^{n-1}\sigma_{k-1}(\mu|i)
 =a(n-k)\sigma_{k-1}(\mu).
\]
Therefore
\[
\kappa_n> -\frac{n-k}{k}a,
\]
and the same estimate holds for every $\kappa_i$.  This proves \eqref{eq:cone-basic}.

Because $k\ge2$, one has $\kappa\in\Gamma_2$.  Thus $H=\sigma_1(\kappa)>0$ and
\[
H^2-\abs{A_u}^2=2\sigma_2(\kappa)>0,
\]
which yields $\abs{A_u}<H$.  Since $a=\max_i\kappa_i$, we also have $H=\sum_i\kappa_i\le na$.

For $Q$ we use the identity
\[
\sigma_1\sigma_k=(k+1)\sigma_{k+1}
 +\sum_i\kappa_i^2\sigma_{k-1}(\kappa|i).
\]
The last sum is precisely $Q$, and hence
\[
Q=H\sigma_k-(k+1)\sigma_{k+1}.
\]
If $\sigma_{k+1}\le0$, then $Q\ge Hf$, which is stronger than the desired estimate.  If $\sigma_{k+1}>0$, then $\kappa\in\Gamma_{k+1}$.  For the normalized elementary symmetric functions
\[
S_j=\frac{\sigma_j}{\binom nj},
\]
Newton--Maclaurin implies that the ratios $S_{j+1}/S_j$ are nonincreasing.  In particular,
\[
\frac{S_{k+1}}{S_k}\le S_1=\frac{H}{n}.
\]
Since $\binom n{k+1}/\binom nk=(n-k)/(k+1)$, this gives
\[
(k+1)\sigma_{k+1}\le\frac{n-k}{n}H\sigma_k.
\]
Substitution into the preceding identity yields
\[
Q\ge\frac{k}{n}Hf.
\]
Finally, the normalized Maclaurin inequality gives
\[
\left(\frac{f}{\binom nk}\right)^{1/k}
 =S_k^{1/k}\le S_1=\frac{H}{n}\le a.
\]
Using $f\ge f_0$ proves \eqref{eq:Q-lower} and \eqref{eq:a-lower}.
\end{proof}

The preceding cone estimates control the lower-order curvature terms, but they do not address the third-order quadratic form produced by differentiating the largest principal curvature.  We use Yan's spectral concavity inequality.  For $\lambda\in\Gamma_k$, let
\[
a=\lambda_1>\lambda_2\ge\cdots\ge\lambda_n,\qquad F=\sigma_k(\lambda),
\]
with $F^{ii}=\sigma_{k-1}(\lambda|i)$ and $F^{ii,jj}=\sigma_{k-2}(\lambda|ij)$ for $i\ne j$. For $\xi\in\R^n$, define
\begin{align}
Q_\gamma(\lambda;\xi)
={}&\frac{2}{aF}\left(\sum_iF^{ii}\xi_i\right)^2
-\gamma\frac{F^{11}}{a^2}\xi_1^2
-\frac1a\sum_{i,j}F^{ii,jj}\xi_i\xi_j \notag\\
&\quad +\frac2a\sum_{p=2}^n\frac{F^{pp}}{a-\lambda_p}\xi_p^2.
\label{eq:Qgamma}
\end{align}

\begin{theorem}[\cite{Yan2026}, Theorem~1.1]\label{thm:yan}
Let $n\ge3$, $2\le k<n$, and
\begin{equation}
0<\gamma<\min\left\{\frac{2k}{n},\ 1+\frac{2k-n}{2k^2+n}\right\}.
\label{eq:gamma-range}
\end{equation}
There exists $\eta_Y=\eta_Y(n,k,\gamma)>0$ such that
\begin{equation}
0<\frac{F}{a^k}\le\eta_Y
\label{eq:small-level}
\end{equation}
implies
\begin{equation*}
Q_\gamma(\lambda;\xi)\ge0\qquad\text{for every }\xi\in\R^n.
\end{equation*}
\end{theorem}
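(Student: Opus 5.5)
The statement is Yan's spectral concavity inequality, quoted from \cite{Yan2026}. Here it is an imported tool, so the intended "proof" is to invoke that reference. Still, here is how I would prove it directly.

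First I would normalize by homogeneity. $Q_\gamma(\lambda;\xi)$ is homogeneous in $\lambda$ (each term scales like $a^{k-3}$ times a quadratic in $\xi$), so we may set $a=\lambda_1=1$. Then \eqref{eq:small-level} says $\sigma_k(\lambda)\le\eta_Y$ with $\lambda\in\Gamma_k$, $\lambda_1=1$. By the argument of \cref{lem:cone}, the components satisfy $|\lambda_i|\le C(n,k)$, so $\lambda$ ranges over a compact set intersected with $\{\sigma_k\le\eta\}$.

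Next I would argue by contradiction and compactness. Suppose there were $\lambda^{(m)}$ with $\sigma_k(\lambda^{(m)})\to0$ and unit vectors $\xi^{(m)}$ with $Q_\gamma<0$. Pass to a limit $\lambda^\ast\in\partial\Gamma_k$ with $\lambda^\ast_1=1$. Since the form contains $\frac{2}{F}(\sum F^{ii}\xi_i)^2$ with $F\to0$, negativity forces $\sum_i F^{ii}\xi_i\to0$. So in the limit we must show the remaining form is nonnegative on the hyperplane $\sum_i F^{ii}\xi_i=0$, and strictly positive uniformly so near it.

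That remaining form is
\[
-\gamma F^{11}\xi_1^2-\sum_{i,j}F^{ii,jj}\xi_i\xi_j+2\sum_{p\ge2}\frac{F^{pp}}{1-\lambda_p}\xi_p^2 .
\]
On that hyperplane I would use the standard concavity identity for $\sigma_k^{1/k}$ and the formula
\[
F^{ii,jj}=\frac{F^{ii}-F^{jj}}{\lambda_j-\lambda_i}
\]
(with the appropriate $\sigma_{k-2}$ corrections). Then I would split indices into a set $G$ where $\lambda_p$ is close to $1$ and a set $B$ where $1-\lambda_p\ge\delta$.

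The main obstacle is the degenerate limit. At $\partial\Gamma_k$ some $F^{pp}$ may also vanish, and the Hessian term $\sum_{i,j}F^{ii,jj}\xi_i\xi_j$ can be indefinite. The range \eqref{eq:gamma-range} on $\gamma$ must enter exactly in bounding the cross terms $F^{11,pp}\xi_1\xi_p$. I would use $\xi_1=-\sum_{p\ge2}F^{pp}\xi_p/F^{11}$ to eliminate $\xi_1$, and then apply Cauchy--Schwarz against the positive terms $2F^{pp}\xi_p^2/(1-\lambda_p)$. The threshold $\gamma<2k/n$ should arise from the Newton--Maclaurin-type bound $F^{11}\ge \frac{k}{n}\sigma_{k-1}$ near the boundary. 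The second threshold should come from the borderline configuration where $\lambda_2=\dots=\lambda_{k}$ are comparable to $1$.

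Since this is delicate, in the paper I would simply cite \cite[Theorem~1.1]{Yan2026}.
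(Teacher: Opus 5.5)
Your proposal matches the paper: the paper does not prove this statement but imports it verbatim from \cite[Theorem~1.1]{Yan2026} and uses it as a black box, so citing it is exactly what the paper does. Your supplementary heuristic should not be relied on as written, because the bound $F^{11}\ge\frac kn\sigma_{k-1}(\lambda)$ is false (for $\lambda=(1,\epsilon,\dots,\epsilon)$ one has $F^{11}/\sigma_{k-1}(\lambda)\to0$); the correct Newton--Maclaurin-type fact is $aF^{11}\ge\frac knF$, which gives $Q_\gamma(\lambda;e_1)=\frac{F^{11}}{a^2}\bigl(\frac{2aF^{11}}{F}-\gamma\bigr)>0$ and is the natural source of the restriction $\gamma<2k/n$.
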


The small-level condition is used only in the large-curvature branch. If \eqref{eq:small-level} fails along the equation, then
\begin{equation}
a<\left(\frac{f_1}{\eta_Y}\right)^{1/k},
\label{eq:smalllevel-fail}
\end{equation}
so the desired weighted bound is immediate.  Hence all subsequent Jacobi calculations may be carried out in the branch where \eqref{eq:small-level} holds and \cref{thm:yan} is available.

\subsection{Jacobi inequalities}

We derive the differential inequality for the largest curvature.  Since the prescribed function is only $C^{1,1}$, its second derivative is handled by a one-sided chain rule.  The resulting Jacobi inequality is then combined with the graph angle identity, which leaves a positive quadratic curvature term.

The equation must be differentiated twice along a principal direction although the prescribed function is only $C^{1,1}$.  A one-sided chain rule is sufficient.

For a gradient-dependent right-hand side, define on the relevant lower spherical cap
\begin{equation*}
\whf(X,\nu)=f\left(x,z,-\frac{\nu'}{\nu_{n+1}}\right),\qquad X=(x,z)\in\R^{n+1}.
\end{equation*}
The slope bound keeps $-\nu_{n+1}$ uniformly positive, so $\whf$ is $C^{1,1}$ on a fixed compact subset of $\R^{n+1}\times\Sn^n$.

\begin{lemma}\label{lem:semiconvex}
Let $\Phi\in C^{1,1}$ on a coordinate neighborhood of a Riemannian manifold, and let $Y(t)$ be a $C^2$ curve. Assume that $g=\Phi\circ Y$ is $C^2$. Then
\begin{equation*}
g''(0)\ge D\Phi(Y(0))[\nabla_tY'(0)]-C\abs{Y'(0)}^2,
\end{equation*}
where $C$ depends only on the Lipschitz constant of $D\Phi$ and the chosen compact coordinate neighborhood.
\end{lemma}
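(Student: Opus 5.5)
Work entirely in the fixed coordinate chart containing the image of the curve, and reduce to a one-dimensional statement.  In coordinates write $Y(t)=(Y^1(t),\dots,Y^N(t))$.  The covariant acceleration is
\[
(\nabla_tY')^\ell=(Y^\ell)''+\Gamma^\ell_{ij}(Y)\,(Y^i)'(Y^j)'.
\]
Since the Christoffel symbols are bounded on the compact neighborhood, and $D\Phi$ is bounded there as well ($\Phi\in C^{1,1}$), it suffices to prove
\begin{equation*}
g''(0)\ge D\Phi(Y(0))[Y''(0)]-C_1\abs{Y'(0)}^2,
\end{equation*}
with Euclidean second derivatives.  The correction $D\Phi[\Gamma(Y',Y')]$ is then bounded by $\sup\abs{D\Phi}\sup\abs{\Gamma}\abs{Y'(0)}^2$, and this is absorbed into $C$.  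The norm $\abs{Y'(0)}$ is measured in the metric, which is uniformly equivalent to the Euclidean one on the compact set.

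To prove the Euclidean inequality, note that $g$ is $C^1$ with
\[
g'(t)=D\Phi(Y(t))[Y'(t)],
\]
by the ordinary chain rule, since $\Phi\in C^1$.  Because $g$ is assumed $C^2$, we have $g''(0)=\lim_{t\to0}\bigl(g'(t)-g'(0)\bigr)/t$.  Split the difference quotient as
\[
g'(t)-g'(0)=\bigl(D\Phi(Y(t))-D\Phi(Y(0))\bigr)[Y'(t)]+D\Phi(Y(0))\bigl[Y'(t)-Y'(0)\bigr].
\]
Divided by $t$, the second term converges to $D\Phi(Y(0))[Y''(0)]$.  For the first term, the Lipschitz bound $L$ for $D\Phi$ gives
\[
\abs{D\Phi(Y(t))-D\Phi(Y(0))}\le L\abs{Y(t)-Y(0)}\le L\abs{t}\sup_{[0,t]}\abs{Y'}.
\]
Hence the first quotient is bounded in absolute value by $L\sup_{[0,t]}\abs{Y'}\,\abs{Y'(t)}$, which tends to $L\abs{Y'(0)}^2$ by continuity of $Y'$.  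Taking the limit yields
\[
\abs{g''(0)-D\Phi(Y(0))[Y''(0)]}\le L\abs{Y'(0)}^2.
\]
This is in fact a two-sided bound, stronger than the one-sided statement.

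The only delicate point is justifying the limit.  We need no second derivative of $\Phi$, so the a.e.\ twice-differentiability of $C^{1,1}$ functions (Rademacher applied to $D\Phi$) is not required.  The hypothesis that $g$ is $C^2$ guarantees the limit of the difference quotient exists, and the estimate above applies to every quotient, so it passes to the limit.  The constant is $C=L+\sup\abs{D\Phi}\sup\abs{\Gamma}$, up to the metric-equivalence factor.  Hence it depends only on the Lipschitz constant of $D\Phi$ and the compact coordinate neighborhood, with the $C^0$ bound on $D\Phi$ on that compact set absorbed into this dependence.  I expect no genuine obstacle here.  The main care is keeping track of the Christoffel term when the lemma is later applied to graph geodesics, where $\nabla_tY'=0$ and the lemma becomes $g''(0)\ge -C\abs{Y'(0)}^2$.
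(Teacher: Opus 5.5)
Your proof is correct, but it takes a different route from the paper's.

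\textbf{The paper's route.} It passes to normal coordinates centered at $Y(0)$ and uses only the one-sided semiconvex Taylor bound $\Phi(z)\ge\Phi(0)+D\Phi(0)[z]-\frac L2\abs{z}^2$ at the single point $Y(0)$. It substitutes $y(t)=tV+\frac{t^2}{2}\nabla_tY'(0)+o(t^2)$, which holds because the Christoffel symbols vanish at the center. It then compares $t^2$-coefficients with the Taylor expansion of the $C^2$ function $g$.

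\textbf{Your route.} You stay in the fixed chart and differentiate once by the chain rule. You control the difference quotient of $g'$ by the Lipschitz bound for $D\Phi$ along the curve. You then convert $Y''(0)$ to $\nabla_tY'(0)$ at the cost of $\sup\abs{D\Phi}\sup\abs{\Gamma}\abs{Y'(0)}^2$.

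\textbf{What each buys.} Your argument gives a two-sided estimate and makes the chart dependence of the constant explicit. The extra dependence on $\sup\abs{D\Phi}$ is not a defect, because it cannot be avoided in a fixed chart. For a coordinate function $\Phi=x^\ell$ one has $L=0$, yet $g''(0)-D\Phi[\nabla_tY'(0)]=-\Gamma^\ell_{ij}(Y^i)'(0)(Y^j)'(0)$. The paper's phrase ``after enlarging the constant to account for the fixed coordinate chart'' hides exactly such a term: recentering normal coordinates at varying base points changes the Lipschitz constant of $D\Phi$ by terms involving $\abs{D\Phi}$. The paper's argument, for its part, uses only a lower quadratic bound for $\Phi$ at one point. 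It would therefore still work for a merely semiconvex $\Phi$ that is differentiable at $Y(0)$, which matches the one-sided form of the statement.

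\textbf{A correction to your closing remark.} This does not affect your proof of the lemma itself. In the paper the lemma is applied on $\R^{n+1}\times\Sn^n$ to $Y(t)=(X(t),\nu(t))$. Here $X$ is a geodesic of the graph $M_u$, not of $\R^{n+1}$, and $\nu(t)$ is not a great circle. Hence $\nabla_tY'(0)=(-a\nu,\sum_\ell h_{11;\ell}e_\ell)\ne0$. This acceleration is exactly what produces the terms $-aD_X\whf(\nu)+\sum_\ell h_{11;\ell}D_\nu\whf(e_\ell)$ in \cref{cor:source-second}. Setting $\nabla_tY'=0$ there would lose them.
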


\begin{proof}
Choose normal coordinates centered at $Y(0)$ and identify the coordinate neighborhood with a ball in Euclidean space.  Let $L$ be a Lipschitz constant for $D\Phi$ on a slightly smaller compact neighborhood containing the image of $Y(t)$ for $|t|$ small.  The fundamental theorem of calculus gives, for every small vector $z$,
\[
\Phi(z)-\Phi(0)-D\Phi(0)[z]
 =\int_0^1\big(D\Phi(sz)-D\Phi(0)\big)[z] \,ds,
\]
and therefore
\[
\Phi(z)\ge \Phi(0)+D\Phi(0)[z]-\frac L2\abs{z}^2.
\]
Write the coordinate representation of the curve as $y(t)$.  Since the Christoffel symbols vanish at the center of normal coordinates,
\[
y(t)=tV+\frac{t^2}{2}A+o(t^2),
\qquad
V=Y'(0),\quad A=\nabla_tY'(0).
\]
Consequently
\[
\abs{y(t)}^2=t^2\abs V^2+o(t^2).
\]
Applying the preceding semiconvex Taylor inequality with $z=y(t)$ gives
\[
g(t)\ge g(0)+tD\Phi(V)
 +\frac{t^2}{2}\big(D\Phi(A)-L\abs V^2\big)+o(t^2).
\]
On the other hand, $g$ is $C^2$, so
\[
g(t)=g(0)+tg'(0)+\frac{t^2}{2}g''(0)+o(t^2),
\qquad g'(0)=D\Phi(V).
\]
Comparing the quadratic terms yields
\[
g''(0)\ge D\Phi(A)-L\abs V^2.
\]
This gives the stated estimate, after enlarging the constant to account for the fixed coordinate chart.
\end{proof}

Choose a unit-speed geodesic $X(t)$ on the graph of $u$ with $X'(0)=e_1$, where $e_1$ is the principal direction corresponding to $a$. For
\[
Y(t)=(X(t),\nu(t))\in\R^{n+1}\times\Sn^n,
\]
the Gauss, Weingarten, and Codazzi equations imply
\begin{equation}
Y'(0)=(e_1,ae_1),\qquad
\nabla_tY'(0)=\left(-a\nu,\sum_{\ell=1}^nh_{11;\ell}e_\ell\right).
\label{eq:Y-derivs}
\end{equation}
Because $u\in C^4$, the composite $\whf(X(t),\nu(t))=\sigma_k(A_u(X(t)))$ is $C^2$, even though $\whf$ is only $C^{1,1}$.  The next estimate is the form of the differentiated equation needed in the largest-curvature calculation.

\begin{corollary}\label{cor:source-second}
At a principal frame,
\begin{equation*}
\sum_iF^{ii}h_{ii;11}+\sigma_k^{pq,rs}h_{pq;1}h_{rs;1}
\ge -aD_X\whf(\nu)+\sum_\ell h_{11;\ell}D_\nu\whf(e_\ell)-C(1+a^2).
\end{equation*}
If $f=f(x,u)$, then
\begin{equation*}
\sum_iF^{ii}h_{ii;11}+\sigma_k^{pq,rs}h_{pq;1}h_{rs;1}
\ge -aD_X\whf(\nu)-C.
\end{equation*}
In both cases,
\begin{equation}
\sum_iF^{ii}h_{ii;j}=D_X\whf(e_j)+\kappa_jD_\nu\whf(e_j).
\label{eq:source-first}
\end{equation}
\end{corollary}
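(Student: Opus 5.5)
We differentiate the composite identity
\[
g(t):=\whf(X(t),\nu(t))=\sigma_k(A_u(X(t)))
\]
along the unit-speed geodesic $X(t)$ with $X'(0)=e_1$. The first variation, computed along geodesics in a general direction $e_j$, gives \eqref{eq:source-first}. The second variation, compared with the one-sided bound of \cref{lem:semiconvex}, gives the inequalities.

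\textbf{First derivative.} Differentiating the left side gives $\sigma_k^{pq}h_{pq;j}=\sum_iF^{ii}h_{ii;j}$ at a principal frame, since $\sigma_k^{pq}$ is diagonal there with entries $F^{ii}$. On the right side, the chain rule applies because $\whf\in C^1$. The velocity of $Y$ in direction $e_j$ is $(e_j,\nabla_{e_j}\nu)=(e_j,\kappa_je_j)$ by the Weingarten convention $\nabla_e\nu=A(e)$. Hence the derivative equals $D_X\whf(e_j)+\kappa_jD_\nu\whf(e_j)$, which is \eqref{eq:source-first}.

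\textbf{Second derivative.} Along the geodesic, the second derivative of the left side is
\[
\sigma_k^{pq}h_{pq;11}+\sigma_k^{pq,rs}h_{pq;1}h_{rs;1}.
\]
There is no first-order correction term, because $X$ is a geodesic and $\nabla_tX'=0$ in the tangent part. At the principal frame the first term is $\sum_iF^{ii}h_{ii;11}$. For the right side, $g$ is $C^2$ because $u\in C^4$, so \cref{lem:semiconvex} applies on the fixed compact subset of $\R^{n+1}\times\Sn^n$ where $\whf$ is $C^{1,1}$. It gives
\[
g''(0)\ge D\whf(Y(0))[\nabla_tY'(0)]-C\abs{Y'(0)}^2.
\]
Substituting \eqref{eq:Y-derivs}, we get $\abs{Y'(0)}^2=1+a^2$ and
\[
D\whf[\nabla_tY'(0)]=-aD_X\whf(\nu)+\sum_\ell h_{11;\ell}D_\nu\whf(e_\ell).
\]
This yields the first inequality.

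\textbf{Verifying \eqref{eq:Y-derivs}.} This is the step requiring care. The ambient acceleration of a surface geodesic is normal, $X''=h(e_1,e_1)(\pm\nu)$, with sign $-a\nu$ under the downward-normal convention. For the normal component, we differentiate $\nu'=A(X')$. Its tangential part is $\nabla_{e_1}(A e_1)=h_{1\ell;1}e_\ell=h_{11;\ell}e_\ell$ by Codazzi. The remaining normal part, $-\abs{Ae_1}^2\nu$, is normal to $\Sn^n$ at $\nu$ and so disappears in the spherical covariant derivative. I would simply cite the paper's computation \eqref{eq:Y-derivs}.

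\textbf{The case $f=f(x,u)$.} Here $\whf$ does not depend on $\nu$, so $D_\nu\whf=0$. The relevant curve is then just $X(t)$ in $\R^{n+1}$, with $\abs{X'}^2=1$. Applying \cref{lem:semiconvex} to $\Phi=\whf(X)$ along $X$ alone replaces $C(1+a^2)$ by $C$, giving the second inequality.

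The main obstacle is bookkeeping. We need the constant $C$ uniform, which holds because the slope bound keeps $Y(t)$ in a fixed compact set. We also need the sign conventions in \eqref{eq:Y-derivs} to be consistent with $\nabla_e\nu=A(e)$.
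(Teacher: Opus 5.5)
Your proposal is correct and follows the same route as the paper. You differentiate $g(t)=\whf(X(t),\nu(t))=\sigma_k(A_u(X(t)))$ twice along the unit-speed geodesic; the paper justifies your ``no correction term'' remark by parallel-transporting the principal frame. You then apply \cref{lem:semiconvex} with the data \eqref{eq:Y-derivs}, on $\R^{n+1}\times\Sn^n$ in general or along $X(t)$ alone when $f=f(x,u)$, and obtain \eqref{eq:source-first} from the first-order chain rule. Your derivation of \eqref{eq:Y-derivs} is a correct check of what the paper only asserts via Gauss, Weingarten and Codazzi; in particular, the $-a^2\nu$ part of $\nu''$ does drop out of the spherical covariant derivative.
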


\begin{proof}
Let $X(t)$ be the unit-speed graph geodesic chosen above, and parallel transport an orthonormal frame along $X(t)$ so that it agrees with the principal frame at $t=0$.  Set
\[
g(t)=\whf(X(t),\nu(t))=\sigma_k(A_u(X(t))).
\]
Since $u\in C^4$, the second fundamental form is $C^2$ and hence $g$ is $C^2$.  Differentiating the right-hand side at $t=0$ gives
\[
g''(0)=\sum_iF^{ii}h_{ii;11}
 +\sigma_k^{pq,rs}h_{pq;1}h_{rs;1}.
\]
For the geometric curve $Y(t)=(X(t),\nu(t))$, formula \eqref{eq:Y-derivs} gives
\[
Y'(0)=(e_1,ae_1),
\qquad
\nabla_tY'(0)=\left(-a\nu,\sum_\ell h_{11;\ell}e_\ell\right).
\]
Applying \cref{lem:semiconvex} to $\Phi=\whf$ therefore yields
\begin{align*}
g''(0)
&\ge -aD_X\whf(\nu)
 +\sum_\ell h_{11;\ell}D_\nu\whf(e_\ell)
 -C\big(\abs{e_1}^2+a^2\abs{e_1}^2\big)\\
&= -aD_X\whf(\nu)
 +\sum_\ell h_{11;\ell}D_\nu\whf(e_\ell)-C(1+a^2),
\end{align*}
which is the first assertion.

If $f=f(x,u)$, then $\whf$ depends only on the ambient position $X$.  We apply the same one-sided Taylor estimate to the curve $X(t)$ alone.  Since $\abs{X'(0)}=1$ and $\nabla_tX'(0)=-a\nu$, the semiconvex error is now bounded by a fixed constant rather than by $C(1+a^2)$.  Hence
\[
g''(0)\ge -aD_X\whf(\nu)-C,
\]
which proves the second assertion.

For the first derivative, differentiate the equation in an arbitrary principal direction $e_j$.  At the diagonal point,
\[
\frac{d}{ds}\sigma_k(A_u)=\sum_iF^{ii}h_{ii;j}.
\]
On the other hand,
\[
\frac{d}{ds}\whf(X,\nu)
 =D_X\whf(e_j)+D_\nu\whf(\nabla_{e_j}\nu)
 =D_X\whf(e_j)+\kappa_jD_\nu\whf(e_j),
\]
because $\nabla_{e_j}\nu=\kappa_j e_j$ in the principal frame.  This is \eqref{eq:source-first}.
\end{proof}

The Pogorelov test function is built from the largest principal curvature.  Since the final estimate is an upper bound for the largest principal curvature, we set
\[
\ell=\log a.
\]
The simple-root assumption makes $a$ smooth near every point. In a principal frame extended by geodesic coordinates,
\begin{equation}
a_{;ii}=h_{11;ii}+2\sum_{p=2}^n\frac{h_{1p;i}^2}{a-\kappa_p},
\qquad
h_{11;ii}=h_{ii;11}+a\kappa_i(a-\kappa_i).
\label{eq:eigenvalue-second}
\end{equation}

\begin{lemma}\label{lem:jacobi}
Assume \eqref{eq:small-level}, and define
\begin{equation*}
\theta_1=\gamma-1,\qquad
\theta_i=\frac{a+\kappa_i}{a-\kappa_i}\quad (i\ge2).
\end{equation*}
For a general right-hand side $f(x,u,Du)$,
\begin{equation}
\cL\ell\ge kfa-Q+\sum_iF^{ii}\theta_i\ell_i^2+D_\nu\whf(\nabla\ell)-C(1+a).
\label{eq:jacobi-general}
\end{equation}
If $f=f(x,u)$, then
\begin{equation}
\cL\ell\ge kfa-Q+\sum_iF^{ii}\theta_i\ell_i^2-C.
\label{eq:jacobi-position}
\end{equation}
\end{lemma}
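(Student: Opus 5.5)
We compute $\cL\ell$ at a point where $A_u$ is diagonal, working in the principal frame extended by geodesic coordinates as in \eqref{eq:eigenvalue-second}. Since $\ell=\log a$,
\[
\cL\ell=\frac1a\sum_iF^{ii}a_{;ii}-\sum_iF^{ii}\ell_i^2 ,
\]
and by \eqref{eq:eigenvalue-second},
\[
\frac1a\sum_iF^{ii}a_{;ii}
=\frac1a\sum_iF^{ii}h_{ii;11}+\sum_iF^{ii}\kappa_i(a-\kappa_i)+\frac2a\sum_i\sum_{p\ge2}F^{ii}\frac{h_{1p;i}^2}{a-\kappa_p}.
\]
The commutator term is $a\sum_iF^{ii}\kappa_i-Q=kfa-Q$ by \cref{lem:cone}. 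This produces the leading part $kfa-Q$ of both \eqref{eq:jacobi-general} and \eqref{eq:jacobi-position}.

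Next we insert \cref{cor:source-second}:
\[
\sum_iF^{ii}h_{ii;11}\ge-\sigma_k^{pq,rs}h_{pq;1}h_{rs;1}-aD_X\whf(\nu)+\sum_mh_{11;m}D_\nu\whf(e_m)-C(1+a^2).
\]
After dividing by $a$, the source terms contribute $-D_X\whf(\nu)=O(1)$ and $\sum_m (h_{11;m}/a)D_\nu\whf(e_m)=D_\nu\whf(\nabla\ell)$, because $h_{11;m}=a_{;m}$ at a simple root. The error becomes $-C(1+a^2)/a\ge -C(1+a)$, using $a\ge c>0$ from \eqref{eq:a-lower}. In the case $f=f(x,u)$ the $D_\nu$ term is absent and the error is only $-C/a\ge-C$.

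The second derivative of $\sigma_k$ splits in the standard way:
\[
-\sigma_k^{pq,rs}h_{pq;1}h_{rs;1}=-\sum_{i\ne j}F^{ii,jj}h_{ii;1}h_{jj;1}+\sum_{i\ne j}F^{ii,jj}h_{ij;1}^2 .
\]
For the off-diagonal part, the standard identity $F^{11,pp}=(F^{pp}-F^{11})/(a-\kappa_p)$ gives $\sum_{p\ge2}2F^{11,pp}h_{1p;1}^2/a$. From the third-order sum we keep only the terms with $i=1$ or $p=i$ (the rest are nonnegative and discarded). Using Codazzi, $h_{1p;1}=h_{11;p}$ and $h_{1p;p}=h_{pp;1}$, so we obtain
\[
\frac2a\sum_{p\ge2}\frac{F^{11}h_{11;p}^2+F^{pp}h_{pp;1}^2}{a-\kappa_p}
+\frac2a\sum_{p\ge2}\frac{(F^{pp}-F^{11})h_{11;p}^2}{a-\kappa_p}.
\]
The $F^{11}$ contributions cancel, leaving $\frac2a\sum_{p\ge2}\frac{F^{pp}h_{11;p}^2}{a-\kappa_p}+\frac2a\sum_{p\ge2}\frac{F^{pp}h_{pp;1}^2}{a-\kappa_p}$. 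The first of these equals $\sum_{p\ge2}F^{pp}\frac{2a}{a-\kappa_p}\ell_p^2$. Combined with $-F^{pp}\ell_p^2$ this yields exactly $F^{pp}\theta_p\ell_p^2$, since $\frac{2a}{a-\kappa_p}-1=\frac{a+\kappa_p}{a-\kappa_p}$.

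The remaining third-order block in the direction $1$ is
\[
-\frac1a\sum_{i,j}F^{ii,jj}\xi_i\xi_j+\frac2a\sum_{p\ge2}\frac{F^{pp}}{a-\kappa_p}\xi_p^2-F^{11}\frac{\xi_1^2}{a^2},
\qquad \xi_i=h_{ii;1},
\]
where the last term comes from $-F^{11}\ell_1^2$. We want this to be $\ge F^{11}(\gamma-1)\ell_1^2$ up to controlled source errors. Comparing with $Q_\gamma$ in \eqref{eq:Qgamma}, the block equals $Q_\gamma(\kappa;\xi)-\frac{2}{aF}(\sum_iF^{ii}\xi_i)^2+(\gamma-1)F^{11}\xi_1^2/a^2$. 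By \cref{thm:yan}, valid under \eqref{eq:small-level}, $Q_\gamma\ge0$.

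By \eqref{eq:source-first}, $\sum_iF^{ii}\xi_i=D_X\whf(e_1)+aD_\nu\whf(e_1)=O(1+a)$, so the subtracted term is $O((1+a)^2/(af))=O(1+a)$, which is absorbed into $-C(1+a)$. For $f=f(x,u)$ the sum is $O(1)$, so this term is $O(1/a)=O(1)$ and only contributes $-C$.

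The main obstacle is the bookkeeping in this last step. The positive spectral terms are used twice: once for $\theta_p$ and once inside $Q_\gamma$. These are in fact distinct pieces, $h_{11;p}$ versus $h_{pp;1}$, and the proof depends on confirming that the off-diagonal $F^{ii,jj}h_{ij;1}^2$ expansion supplies both after the $F^{11}$ cancellation. I would verify this carefully in the order above.
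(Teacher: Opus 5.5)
Your proposal is correct and follows essentially the same route as the paper's proof. The commutator gives $kfa-Q$, and \cref{cor:source-second} handles the $C^{1,1}$ source. The $F^{11,pp}$ off-diagonal terms combine with the $i=1$ eigenvalue-variation terms through $F^{pp}-F^{11}=(a-\kappa_p)F^{11,pp}$ to give $F^{pp}\theta_p\ell_p^2$. The leftover block equals $Q_\gamma$ minus $\frac{2}{af}(\sum_iF^{ii}\xi_i)^2$ plus $(\gamma-1)F^{11}\ell_1^2$, and the square is controlled by \eqref{eq:source-first}.

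The bookkeeping concern you raise is settled as you expect. The $i=1$ terms involve $h_{1p;1}=h_{11;p}$, while the $i=p$ terms involve $h_{1p;p}=h_{pp;1}$, so no positive term is used twice.
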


\begin{proof}
All computations are made at the fixed point where the frame is orthonormal, geodesic, and diagonalizes $h$.  Put
\[
\xi_i=h_{ii;1}.
\]
The simple-eigenvalue formula in \eqref{eq:eigenvalue-second} gives
\begin{align*}
\cL a
&=\sum_iF^{ii}h_{11;ii}
 +2\sum_i\sum_{p=2}^n\frac{F^{ii}}{a-\kappa_p}h_{1p;i}^2\\
&=\sum_iF^{ii}h_{ii;11}
 +a\sum_iF^{ii}\kappa_i(a-\kappa_i)
 +2\sum_i\sum_{p=2}^n\frac{F^{ii}}{a-\kappa_p}h_{1p;i}^2.
\end{align*}
Using $\sum_iF^{ii}\kappa_i=kf$ and the definition of $Q$,
\[
a\sum_iF^{ii}\kappa_i(a-\kappa_i)=a^2kf-aQ.
\]
Therefore
\begin{align*}
\cL\ell
={}&kfa-Q+\frac1a\sum_iF^{ii}h_{ii;11}
 +\frac2a\sum_i\sum_{p=2}^n\frac{F^{ii}}{a-\kappa_p}h_{1p;i}^2
 -\frac1{a^2}\sum_iF^{ii}h_{11;i}^2.
\end{align*}
Apply \cref{cor:source-second} to the third term.  At a diagonal matrix the second derivative of $\sigma_k$ satisfies
\[
\sigma_k^{ij,rs}B_{ij}B_{rs}
 =\sum_{p\ne q}F^{pp,qq}B_{pp}B_{qq}
 -2\sum_{p<q}F^{pp,qq}B_{pq}^2.
\]
For $\kappa\in\Gamma_k$, the quantities $F^{pp,qq}=\sigma_{k-2}(\kappa|pq)$ are positive by the G\aa rding cone structure \cite{Garding1959}.  Hence the off-diagonal part contributes nonnegative terms after the preceding expression is inserted with a minus sign.

Separate the terms according to their indices.  Codazzi gives
\[
h_{1p;1}=h_{11;p}=a\ell_p,
\qquad
h_{1p;p}=h_{pp;1}=\xi_p,
\qquad
h_{11;1}=\xi_1=a\ell_1.
\]
The positive off-diagonal part coming from $-a^{-1}\sigma_k^{ij,rs}h_{ij;1}h_{rs;1}$ contains
\[
\frac2a\sum_{p=2}^nF^{11,pp}h_{1p;1}^2
 =2a\sum_{p=2}^nF^{11,pp}\ell_p^2,
\]
while the eigenvalue-variation term in the preceding identity contributes
\[
2a\sum_{p=2}^n\frac{F^{11}}{a-\kappa_p}\ell_p^2
 +\frac2a\sum_{p=2}^n\frac{F^{pp}}{a-\kappa_p}\xi_p^2.
\]
All terms with $2\le i\le n$, $i\ne p$, together with the off-diagonal terms indexed by $2\le p<q\le n$, are nonnegative and may be discarded.  Finally,
\[
-\frac1{a^2}\sum_iF^{ii}h_{11;i}^2
 =-\frac{F^{11}}{a^2}\xi_1^2-
 \sum_{p=2}^nF^{pp}\ell_p^2.
\]
For $p\ge2$, the total coefficient of $\ell_p^2$ is therefore
\begin{align*}
2aF^{11,pp}+\frac{2aF^{11}}{a-\kappa_p}-F^{pp}
&=\frac{2aF^{pp}}{a-\kappa_p}-F^{pp}\\
&=F^{pp}\frac{a+\kappa_p}{a-\kappa_p}.
\end{align*}
In the first equality we used
\[
F^{pp}-F^{11}=(a-\kappa_p)F^{11,pp}.
\]

At this point the remaining third-order expression is exactly the one appearing in $Q_\gamma$.  Add and subtract
\[
\frac{2}{af}\left(\sum_iF^{ii}\xi_i\right)^2
\]
and split
\[
-\frac{F^{11}}{a^2}\xi_1^2
 =-\gamma\frac{F^{11}}{a^2}\xi_1^2
 +(\gamma-1)F^{11}\ell_1^2.
\]
Using the definition \eqref{eq:Qgamma}, we obtain
\begin{align*}
\cL\ell\ge{}&kfa-Q+Q_\gamma(\kappa;\xi)
 +(\gamma-1)F^{11}\ell_1^2
 +\sum_{p=2}^n\frac{a+\kappa_p}{a-\kappa_p}F^{pp}\ell_p^2\\
&\quad -\frac{2}{af}\left(\sum_iF^{ii}\xi_i\right)^2
 +D_\nu\whf(\nabla\ell)-C(1+a)
\end{align*}
in the gradient-dependent case.  In the position-dependent case the normal drift is absent and the last error improves to $-C$ after division by $a$ and use of \eqref{eq:a-lower}.

It remains to control the explicit negative square.  By \eqref{eq:source-first},
\[
\sum_iF^{ii}\xi_i=D_X\whf(e_1)+aD_\nu\whf(e_1).
\]
The $C^1$ norm of $\whf$ is bounded on the fixed compact range, while $f\ge f_0$.  Thus, in the general case,
\[
\frac{2}{af}\left(\sum_iF^{ii}\xi_i\right)^2
 \le C\frac{(1+a)^2}{a}
 \le C(1+a),
\]
where the last inequality uses the fixed positive lower bound \eqref{eq:a-lower}.  If $f=f(x,u)$, then $D_\nu\whf=0$, and the same term is bounded by $C/a\le C$.

Finally, \eqref{eq:small-level} and \cref{thm:yan} imply $Q_\gamma(\kappa;\xi)\ge0$.  Recalling the definition of $\theta_i$ gives \eqref{eq:jacobi-general} and \eqref{eq:jacobi-position}.
\end{proof}

The Jacobi inequality for $\ell$ still contains the unfavorable term $-Q$ and, in the gradient-dependent case, a normal drift.  The graph structure supplies a complementary identity through the vertical angle function.  It is this identity that restores a positive multiple of $Q$ after the two estimates are combined.

Let $E=-e_{n+1}$ and
\[
\varphi=\inn{\nu_u}{E}=W_u^{-1},\qquad
\omega=\log W_u=-\log\varphi.
\]
Then
\begin{equation*}
\omega_i=-W_u\kappa_i\inn{E}{e_i}.
\end{equation*}
For a covector $\zeta$, write
\[
\abs{\zeta}_F^2=\sum_iF^{ii}\zeta_i^2.
\]

\begin{lemma}\label{lem:angle}
For a general right-hand side,
\begin{equation*}
\cL\omega=Q+\abs{\nabla\omega}_F^2+D_\nu\whf(\nabla\omega)-W_uD_X\whf(E^T),
\end{equation*}
where $E^T$ is the tangential projection of $E$. If $f=f(x,u)$, the $D_\nu\whf$ term is absent and the last term is uniformly bounded.
\end{lemma}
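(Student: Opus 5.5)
We compute $\cL\varphi$ for the support-type function $\varphi=\inn{\nu_u}{E}$ by the standard hypersurface identity, and then pass to $\omega=-\log\varphi$. All computations are made at a point with an orthonormal geodesic frame diagonalizing $h$.

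\textbf{First derivatives.} Since $E$ is constant and $\nabla_{e_i}\nu=\kappa_ie_i$, we have
\[
\varphi_i=\kappa_i\inn{e_i}{E}.
\]
Differentiating once more, using Codazzi and the Gauss formula $\nabla_{e_i}e_i=-\kappa_i\nu$ (normal part, with the sign fixed by $\nabla_e\nu=A(e)$), gives
\[
\varphi_{ii}=h_{ii;\ell}\inn{e_\ell}{E}-\kappa_i^2\varphi .
\]
The sign of the last term is checked against the convention $h_{ij}=\inn{\nabla_{e_i}\nu}{e_j}$, so that $\inn{D_{e_i}e_j}{\nu}=-h_{ij}$.

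\textbf{Contracting with $F^{ii}$.} Contracting and using \eqref{eq:source-first} with $j=\ell$ yields
\[
\cL\varphi=\sum_\ell\bigl(D_X\whf(e_\ell)+\kappa_\ell D_\nu\whf(e_\ell)\bigr)\inn{e_\ell}{E}-Q\varphi .
\]
Here $\sum_\ell D_X\whf(e_\ell)\inn{e_\ell}{E}=D_X\whf(E^T)$. For the second sum we use $\kappa_\ell\inn{e_\ell}{E}=\varphi_\ell$, so it equals $D_\nu\whf(\nabla\varphi)$.

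\textbf{Passing to $\omega$.} With $\omega=-\log\varphi$ we have
\[
\cL\omega=-\frac{\cL\varphi}{\varphi}+\frac{\abs{\nabla\varphi}_F^2}{\varphi^2},
\]
and $\nabla\omega=-\nabla\varphi/\varphi$. Substituting gives
\[
\cL\omega=Q+\abs{\nabla\omega}_F^2+D_\nu\whf(\nabla\omega)-W_uD_X\whf(E^T),
\]
since $1/\varphi=W_u$. This agrees with the stated formula for $\omega_i$.

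\textbf{The case $f=f(x,u)$.} Here $D_\nu\whf=0$. Moreover $\abs{E^T}\le1$, $W_u\le\sqrt{1+M^2}$, and $D_X\whf$ is bounded on the fixed compact range, so the last term is uniformly bounded.

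\textbf{Main obstacle.} The only delicate point is sign bookkeeping: the downward normal, the convention $\nabla_e\nu=A(e)$, and the orientation of $E=-e_{n+1}$ must be consistent so that $\varphi>0$ and the $-Q\varphi$ term carries the correct sign. The differentiation of the $\nu$-dependence of $\whf$ in \eqref{eq:source-first}, via $\nabla_{e_j}\nu=\kappa_je_j$, must also be matched with $\varphi_\ell$ exactly. I would verify these signs on a sphere, where $\kappa_i$ are constant and $\varphi$ is explicit.
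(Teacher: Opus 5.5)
Your proof is correct and follows the paper's route. You compute $\varphi_{ii}$ in a principal geodesic frame, contract with $F^{ii}$, substitute \eqref{eq:source-first} weighted by $\inn{e_\ell}{E}$, and pass to $\omega=-\log\varphi$ via $\nabla\varphi=-\varphi\nabla\omega$; your $\varphi_{ii}=\sum_\ell h_{ii;\ell}\inn{e_\ell}{E}-\kappa_i^2\varphi$ is the correctly indexed form of the paper's $h_{ii;i}\inn{E}{e_i}$, which must be read as this sum over $\ell$.
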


\begin{proof}
At the point under consideration choose the principal geodesic frame.  Since $E$ is a constant ambient vector and $\nabla_{e_i}\nu=\kappa_i e_i$,
\[
\varphi_i=\kappa_i\inn{E}{e_i},
\qquad
\omega_i=-\frac{\varphi_i}{\varphi}
 =-W_u\kappa_i\inn{E}{e_i}.
\]
Differentiating $\varphi_i$ once more and using that the tangential connection coefficients vanish at the point gives
\[
\varphi_{ii}=h_{ii;i}\inn{E}{e_i}-\kappa_i^2\varphi.
\]
After contraction with $F^{ii}$,
\[
\cL\varphi
 =\sum_iF^{ii}h_{ii;i}\inn{E}{e_i}-\varphi Q.
\]
To rewrite the first term, decompose the tangential projection of $E$ as
\[
E^T=\sum_j\inn{E}{e_j}e_j.
\]
Multiply \eqref{eq:source-first} by $\inn{E}{e_j}$ and sum over $j$.  This yields
\begin{align*}
\sum_iF^{ii}h_{ii;i}\inn{E}{e_i}
&=D_X\whf(E^T)
 +\sum_j\kappa_j\inn{E}{e_j}D_\nu\whf(e_j)\\
&=D_X\whf(E^T)-\varphi D_\nu\whf(\nabla\omega),
\end{align*}
because $\omega_j=-\kappa_j\inn{E}{e_j}/\varphi$.  Therefore
\[
\cL\varphi=D_X\whf(E^T)-\varphi Q-
\varphi D_\nu\whf(\nabla\omega).
\]
Now use $\omega=-\log\varphi$:
\[
\cL\omega
=-\frac{\cL\varphi}{\varphi}
 +\frac1{\varphi^2}\sum_iF^{ii}\varphi_i^2
=Q+\abs{\nabla\omega}_F^2
 +D_\nu\whf(\nabla\omega)-W_uD_X\whf(E^T).
\]
If $f=f(x,u)$, then $D_\nu\whf=0$.  Moreover $\abs{E^T}\le1$, $W_u\le\sqrt{1+M^2}$, and $D_X\whf$ is bounded on the fixed coordinate range, so the last term is uniformly bounded.
\end{proof}

The two identities are now combined.  The coefficient $q>1$ is kept free at this stage: in the position-dependent case it will be chosen algebraically, while in the gradient-dependent case it must also compensate for the linear curvature loss.  Define
\begin{equation}
G=\ell+q\omega,\qquad Z=e^G=aW_u^q,\qquad G_i=\ell_i+q\omega_i.
\label{eq:GZ}
\end{equation}
Combining \cref{lem:jacobi,lem:angle} yields
\begin{equation}
\cL G\ge kfa+(q-1)Q+\sum_iF^{ii}(\theta_i\ell_i^2+q\omega_i^2)
+D_\nu\whf(\nabla G)-Ca-C(1+q)
\label{eq:LG-general}
\end{equation}
for a general right-hand side, and
\begin{equation}
\cL G\ge kfa+(q-1)Q+\sum_iF^{ii}(\theta_i\ell_i^2+q\omega_i^2)-C(1+q)
\label{eq:LG-position}
\end{equation}
when $f=f(x,u)$.

\subsection{Choice of weights and good--bad spectral directions}

It remains to choose $q$ and $\gamma$ so that the gradient terms in the preceding Jacobi inequalities are compatible with the positive curvature term $Q$.  The main position-dependent theorem admits a fixed algebraic choice throughout the full range $2\le k<n$.  For the gradient-dependent extension, $q$ must also absorb the linear loss in \eqref{eq:LG-general}; this is where the restriction $k\ge n/2$ enters.  After the parameters are fixed, a good--bad decomposition isolates the directions in which the gradient square is weak and shows that precisely those directions are controlled by $Q$.

\begin{lemma}\label{lem:weights-position}
For every $2\le k<n$, choose
\begin{equation}
\gamma=\frac{k}{n},\qquad q=\frac{2n-k}{2(n-k)}.
\label{eq:position-weights}
\end{equation}
Then \eqref{eq:gamma-range} holds, and
\begin{equation}
q-1=\frac{k}{2(n-k)}>0,
\qquad
1-q(1-\gamma)=\frac{k}{2n}>0.
\label{eq:position-margins}
\end{equation}
If $k<n/2$, then
\begin{equation}
1-q\frac{n-2k}{n}>0.
\label{eq:position-extra-margin}
\end{equation}
\end{lemma}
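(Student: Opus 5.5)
The statement is purely algebraic, so the plan is to substitute the explicit choices \eqref{eq:position-weights} into each claimed inequality and reduce every claim to the positivity of a polynomial in $n$ and $k$, using only $2\le k<n$. No earlier result is needed beyond the definition \eqref{eq:gamma-range} of the admissible range for $\gamma$. I would carry out four checks in order.

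First I check \eqref{eq:gamma-range} for $\gamma=k/n$. The bound $k/n<2k/n$ is immediate since $k>0$. For the second bound I must show
\[
\frac kn<1+\frac{2k-n}{2k^2+n},
\qquad\text{equivalently}\qquad
\frac{n-k}{n}>\frac{n-2k}{2k^2+n}.
\]
I split into two cases.
\begin{itemize}
\item If $k\ge n/2$, the right side is nonpositive while the left side is positive, so the inequality holds.
\item If $k<n/2$, I clear the positive denominators and compare $(n-k)(2k^2+n)$ with $n(n-2k)$. Their difference is $2k^2n-2k^3+kn=k\bigl(2k(n-k)+n\bigr)>0$.
\end{itemize}
This case split is the only point where any care is needed, and it is the step I expect to be the (mild) main obstacle.

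Second, the margins \eqref{eq:position-margins} are direct computations:
\[
q-1=\frac{2n-k-2(n-k)}{2(n-k)}=\frac{k}{2(n-k)}>0.
\]
Since $1-\gamma=(n-k)/n$, the factor $n-k$ cancels in the second margin:
\[
1-q(1-\gamma)=1-\frac{2n-k}{2n}=\frac{k}{2n}>0.
\]

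Finally, for \eqref{eq:position-extra-margin} with $k<n/2$, I note that $n-2k>0$, so the claim is equivalent to $(2n-k)(n-2k)<2n(n-k)$. Expanding, the difference of the two sides is
\[
(2n^2-2nk)-(2n^2-5nk+2k^2)=k(3n-2k),
\]
which is positive because $k<n$. This completes the verification.
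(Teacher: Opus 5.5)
Your proposal is correct and is essentially the paper's proof: the same direct substitutions give $q-1=k/(2(n-k))$, $1-q(1-\gamma)=k/(2n)$ and the numerator $k(3n-2k)$, and for the second bound in \eqref{eq:gamma-range} the paper obtains the same quantity $k\{2k(n-k)+n\}$ over the common denominator $n(2k^2+n)$. Your case split on $k\ge n/2$ is harmless but unnecessary, since both denominators are positive and cross-multiplying is valid whatever the sign of $n-2k$.
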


\begin{proof}
The first bound in \eqref{eq:gamma-range} is immediate because
\[
0<\gamma=\frac{k}{n}<\frac{2k}{n}.
\]
For the second bound, a direct calculation gives
\[
1+\frac{2k-n}{2k^2+n}-\frac{k}{n}
 =\frac{k\{2k(n-k)+n\}}{n(2k^2+n)}>0,
\]
so $\gamma$ lies in the admissible interval of \cref{thm:yan}.  From the definition of $q$,
\[
q-1=\frac{2n-k-2(n-k)}{2(n-k)}
 =\frac{k}{2(n-k)}>0.
\]
Also,
\begin{align*}
1-q(1-\gamma)
&=1-\frac{2n-k}{2(n-k)}\frac{n-k}{n}\\
&=\frac{k}{2n}>0.
\end{align*}
If $k<n/2$, then
\begin{align*}
1-q\frac{n-2k}{n}
&=1-\frac{(2n-k)(n-2k)}{2n(n-k)}\\
&=\frac{k(3n-2k)}{2n(n-k)}>0.
\end{align*}
These positive margins are used below when a coefficient $\theta_i$ is negative.
\end{proof}

\begin{lemma}\label{lem:weights-general}
Assume $n/2\le k<n$. There are $q>1$, an admissible $\gamma$ satisfying \eqref{eq:gamma-range}, and constants $c_0,\mu>0$ such that
\begin{equation}
kfa+(q-1)Q-Ca\ge c_0a+\mu Q.
\label{eq:linear-absorb}
\end{equation}
If $k>n/2$, every $\theta_i$ has a positive lower bound. If $k=n/2$, one may choose
\begin{equation}
\gamma=1-\frac1{2q},
\label{eq:gamma-critical}
\end{equation}
so that $q(1-\gamma)=1/2$.
\end{lemma}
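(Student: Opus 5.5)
The estimate \eqref{eq:linear-absorb} follows from the lower bound $Q\ge\frac kn f_0a$ of \cref{lem:cone} by taking $q$ large. The statements about $\theta_i$ follow from the cone bound $\kappa_i>-\frac{n-k}{k}a$ together with a choice of $\gamma$ inside the interval \eqref{eq:gamma-range}. The one point to verify is that the constant $C$ in front of $a$ in \eqref{eq:LG-general} does not depend on $q$. Tracing the derivation, the $-C(1+a)$ comes only from \cref{lem:jacobi}, which does not involve $q$. The angle identity of \cref{lem:angle}, multiplied by $q$, contributes only $-qW_uD_X\whf(E^T)$, which is bounded by $Cq$ and is already in $-C(1+q)$. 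Hence $C$ depends only on $n,k,M,f_0,f_1$ and $\norm{f}_{C^{1,1}}$, and we may fix $q$ after $C$.

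\emph{Step 1: the absorption inequality.} By \eqref{eq:Q-lower}, $Q\ge\frac kn f_0 a$. Choose $q>1$ so large that
\[
\frac{q-1}{2}\cdot\frac kn f_0\ge C .
\]
Then $\frac{q-1}{2}Q\ge Ca$. Since $kfa\ge kf_0a$, we obtain
\[
kfa+(q-1)Q-Ca\ge kf_0a+\frac{q-1}{2}Q,
\]
which is \eqref{eq:linear-absorb} with $c_0=kf_0$ and $\mu=(q-1)/2$. Enlarging $q$ further keeps this valid, so $q$ can also be taken as large as later steps require.

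\emph{Step 2: the case $k>n/2$.} For $i\ge2$, \eqref{eq:cone-basic} gives
\[
a+\kappa_i>\Big(1-\frac{n-k}{k}\Big)a=\frac{2k-n}{k}\,a>0,
\qquad
0<a-\kappa_i<\Big(1+\frac{n-k}{k}\Big)a=\frac nk\,a .
\]
Therefore $\theta_i\ge\frac{2k-n}{n}>0$ for $i\ge 2$.

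For $\theta_1=\gamma-1$ we need $\gamma>1$. When $k>n/2$ we have $2k/n>1$ and $1+\frac{2k-n}{2k^2+n}>1$, so the interval \eqref{eq:gamma-range} contains numbers larger than $1$. Fix
\[
\gamma=\tfrac12\Big(1+\min\Big\{\tfrac{2k}{n},\,1+\tfrac{2k-n}{2k^2+n}\Big\}\Big).
\]
This $\gamma$ depends only on $n,k$. It gives $\theta_1=\gamma-1>0$, so every $\theta_i$ has a positive lower bound depending only on $n$ and $k$.

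\emph{Step 3: the critical case $k=n/2$.} Here both quantities in the minimum in \eqref{eq:gamma-range} equal $1$, so the admissible range is $0<\gamma<1$. With $q>1$ from Step 1, set $\gamma=1-\frac1{2q}$. Since $q>1$, this lies in $(\frac12,1)$ and is therefore admissible. By construction $q(1-\gamma)=1/2$, which is \eqref{eq:gamma-critical}.

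I expect the main obstacle to be only the bookkeeping of Step 1, namely confirming that the coefficient of the linear loss in \eqref{eq:LG-general} is independent of $q$, so that fixing $q$ large is not circular. Steps 2 and 3 are direct algebra.
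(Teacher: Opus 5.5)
Your proof is correct and follows essentially the same route as the paper. You absorb the linear loss via $Q\ge\frac kn f_0a$ with $q$ large (the paper takes $c_0=1$ where you take $c_0=kf_0$), use $\kappa_i>-\frac{n-k}{k}a$ with an admissible $\gamma>1$ when $k>n/2$, and set $\gamma=1-\frac1{2q}$ when $k=n/2$; your explicit check that the linear-loss coefficient in \eqref{eq:LG-general} is independent of $q$ and $\gamma$, so that fixing $q$ after $C$ is not circular, is correct and makes explicit what the paper leaves implicit.
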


\begin{proof}
Let $C_*>0$ dominate the coefficient of the linear error $a$ in \eqref{eq:LG-general}.  By \eqref{eq:Q-lower},
\[
Q\ge\frac{k}{n}f_0a.
\]
Choose $q>1$ so large that
\[
kf_0+\frac{q-1}{2}\frac{k}{n}f_0-C_*\ge1.
\]
Then
\begin{align*}
kfa+(q-1)Q-C_*a
&=\left(kfa+\frac{q-1}{2}Q-C_*a\right)
 +\frac{q-1}{2}Q\\
&\ge a+\frac{q-1}{2}Q.
\end{align*}
Thus \eqref{eq:linear-absorb} holds with $c_0=1$ and $\mu=(q-1)/2$.

Assume first that $k>n/2$.  Both upper bounds in \eqref{eq:gamma-range} are then strictly larger than one, so we may fix $\gamma>1$ in that interval.  Hence $\theta_1=\gamma-1>0$.  For $i\ge2$, the cone estimate \eqref{eq:cone-basic} gives
\[
\frac{\kappa_i}{a}>-\frac{n-k}{k}.
\]
Since $t\mapsto(1+t)/(1-t)$ is increasing for $t<1$ and $\kappa_i<a$, we obtain
\[
\theta_i=\frac{1+\kappa_i/a}{1-\kappa_i/a}
>\frac{1-(n-k)/k}{1+(n-k)/k}
=\frac{2k-n}{n}>0.
\]
Thus all $\theta_i$ have a positive lower bound depending only on $n,k$ and the chosen $\gamma$.

If $k=n/2$, both upper endpoints in \eqref{eq:gamma-range} equal one.  Once $q>1$ has been fixed as above, set
\[
\gamma=1-\frac1{2q}.
\]
Then $0<\gamma<1$, so it is admissible, and
\[
q(1-\gamma)=\frac12.
\]
The lemma follows.
\end{proof}

For the critical case in \cref{lem:weights-general} and for the parameters in \cref{lem:weights-position}, define the good and bad spectral sets by
\begin{equation*}
\cG=\{i\ge2:\theta_i\ge1/2\},\qquad
\cB=\{1,\ldots,n\}\setminus\cG.
\end{equation*}

\begin{lemma}\label{lem:good-bad}
In either of the two settings just described, there are constants $c_G,C_B>0$ such that
\begin{equation}
\sum_iF^{ii}(\theta_i\ell_i^2+q\omega_i^2)
\ge c_G\sum_{i\in\cG}F^{ii}G_i^2
-C_B\sum_{i\in\cB}F^{ii}G_i^2.
\label{eq:goodbad-gradient}
\end{equation}
Moreover,
\begin{equation}
Q\ge\frac{a^2}{9}\sum_{i\in\cB}F^{ii}.
\label{eq:Q-bad}
\end{equation}
When $k>n/2$ in \cref{thm:gradient}, every direction is good and
\begin{equation}
\sum_iF^{ii}(\theta_i\ell_i^2+q\omega_i^2)
\ge c_G\sum_iF^{ii}G_i^2.
\label{eq:all-good}
\end{equation}
\end{lemma}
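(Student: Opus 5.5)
The gradient inequality is an algebraic, pointwise statement about each index $i$ separately. I would substitute $\ell_i=G_i-q\omega_i$ and treat the quadratic form
\[
\theta_i(G_i-q\omega_i)^2+q\omega_i^2
\]
in the variable $\omega_i$, for fixed $G_i$.

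\textbf{Positive leading coefficient.} The coefficient of $\omega_i^2$ is $q(q\theta_i+1)$. For this to be positive we need $q\theta_i>-1$, uniformly.
\begin{itemize}
\item For $i=1$ this reads $1-q(1-\gamma)>0$. In the position case it equals $k/(2n)$ by \eqref{eq:position-margins}. In the critical case it equals $1/2$ by \eqref{eq:gamma-critical}. When $k>n/2$ we have $\theta_1>0$ by \cref{lem:weights-general}.
\item For $i\ge2$ the cone bound \eqref{eq:cone-basic} gives $\theta_i>(2k-n)/n$. So $1+q\theta_i\ge 1-q(n-2k)/n$, which is positive by \eqref{eq:position-extra-margin} when $k<n/2$. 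When $k\ge n/2$ we have $\theta_i\ge0$ and positivity is trivial.
\end{itemize}

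\textbf{Minimizing over $\omega_i$.} With $\theta_i+1/q>0$, completing the square gives
\[
\theta_i\ell_i^2+q\omega_i^2\ge\frac{\theta_i}{1+q\theta_i}\,G_i^2 .
\]
\begin{itemize}
\item For $i\in\cG$ we have $\theta_i\ge1/2$, so the coefficient is at least $1/(2+q)$, since $t\mapsto t/(1+qt)$ is increasing.
\item For $i\in\cB$ the coefficient is $\ge -C_B$. This holds because $|\theta_i|$ is bounded below the good threshold, and $1+q\theta_i$ is bounded below by the margins just listed.
\item When $k>n/2$, every $\theta_i\ge c>0$, so every coefficient is $\ge c/(1+qc)$. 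This gives \eqref{eq:all-good}.
\end{itemize}
Summing over $i$ with weights $F^{ii}>0$ yields \eqref{eq:goodbad-gradient}.

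\textbf{The bound \eqref{eq:Q-bad}.} Fix $i\in\cB$.
\begin{itemize}
\item If $i\ge2$, then $\theta_i<1/2$ means $(a+\kappa_i)/(a-\kappa_i)<1/2$, i.e. $\kappa_i<-a/3$. Hence $\kappa_i^2>a^2/9$.
\item If $i=1$, then $\kappa_1^2=a^2\ge a^2/9$.
\end{itemize}
Therefore
\[
Q\ge\sum_{i\in\cB}F^{ii}\kappa_i^2\ge\frac{a^2}{9}\sum_{i\in\cB}F^{ii},
\]
where the first step drops the nonnegative good terms.

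The main obstacle is the uniform positivity of $1+q\theta_i$ in the first step. This is exactly where the weight choices \eqref{eq:position-weights}, \eqref{eq:gamma-critical} and the cone lower bound for $\kappa_i/a$ must be combined. One must also use that the constants depend only on $n,k,q,\gamma$.
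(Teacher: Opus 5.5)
Your proof is correct and follows essentially the same route as the paper. Minimizing the quadratic in $\omega_i$ reproduces the paper's square-completion bound $\theta_i\ell_i^2+q\omega_i^2\ge\frac{\theta_i}{1+q\theta_i}G_i^2$, and uniform positivity of $1+q\theta_i$ comes from the same margins \eqref{eq:position-margins}, \eqref{eq:position-extra-margin}, \eqref{eq:gamma-critical} together with the cone bound; the argument for \eqref{eq:Q-bad} via $\kappa_i<-a/3$ is identical.
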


\begin{proof}
Two elementary square completions will be used.  If $\theta>0$, then
\[
\theta X^2+qY^2
=\frac{\theta}{1+q\theta}(X+qY)^2
 +\frac{q}{1+q\theta}(\theta X-Y)^2.
\]
Hence, whenever $\theta\ge1/2$,
\[
\theta X^2+qY^2\ge c(q)(X+qY)^2,
\qquad
c(q)=\frac{1/2}{1+q/2}>0.
\]
If $\theta=-d<0$ and $qd<1$, then
\[
-dX^2+qY^2
=-\frac{d}{1-qd}(X+qY)^2
 +\frac{q}{1-qd}(dX+Y)^2,
\]
so
\[
-dX^2+qY^2\ge -\frac{d}{1-qd}(X+qY)^2.
\]
Finally, if $0\le\theta<1/2$, the left-hand side is nonnegative, and therefore it is certainly bounded below by $-C(X+qY)^2$.

Apply these identities with $X=\ell_i$ and $Y=\omega_i$, so that $X+qY=G_i$.  It remains only to check that the denominator $1-qd$ stays uniformly positive whenever $\theta_i<0$.  In the critical gradient-dependent case, the only negative coefficient is $\theta_1=\gamma-1=-1/(2q)$, because \eqref{eq:cone-basic} with $k=n/2$ gives $\theta_i>0$ for $i\ge2$.  Thus $qd=1/2$.  In the position-dependent case, for $i=1$ one has
\[
d=1-\gamma=\frac{n-k}{n},
\qquad
1-qd=\frac{k}{2n}>0
\]
by \eqref{eq:position-margins}.  If $i\ge2$ and $\theta_i<0$, this can occur only when $k<n/2$.  The cone estimate gives
\[
\theta_i> -\frac{n-2k}{n},
\]
so $d<(n-2k)/n$ and \eqref{eq:position-extra-margin} yields a uniform positive lower bound for $1-qd$.  Hence the negative coefficient in the preceding square completion is uniformly bounded.  Summing with weights $F^{ii}>0$ proves \eqref{eq:goodbad-gradient}.

It remains to prove \eqref{eq:Q-bad}.  By definition $1\in\cB$ and $\kappa_1=a$, so the $i=1$ contribution to $Q$ is at least $(a^2/9)F^{11}$.  If $i\in\cB$ and $i\ge2$, then $\theta_i<1/2$, and therefore
\[
\frac{a+\kappa_i}{a-\kappa_i}<\frac12.
\]
Since $a-\kappa_i>0$, this is equivalent to
\[
2a+2\kappa_i<a-\kappa_i,
\qquad\text{hence}\qquad
\kappa_i<-\frac a3.
\]
Thus $\kappa_i^2>a^2/9$ for every bad index, and
\[
Q=\sum_iF^{ii}\kappa_i^2
\ge\frac{a^2}{9}\sum_{i\in\cB}F^{ii}.
\]

In the supercritical gradient-dependent case, \cref{lem:weights-general} gives a uniform lower bound $\theta_i\ge\delta>0$ for every $i$.  The positive square completion above then yields
\[
\theta_i\ell_i^2+q\omega_i^2\ge c(\delta,q)G_i^2
\]
for all $i$, and summation proves \eqref{eq:all-good}.
\end{proof}

\section{Proofs of the main theorems}

\subsection{Two-surface localization and comparison geometry}

We adapt the quadratic product-space localization introduced by Qiu and Yan \cite[Section~4.2, proof of Theorem~4.5]{QiuYan2026}.  The localization scale is determined by the height of the comparison gap.  We apply the same construction to a truncated gap so that no boundary bound for the second derivatives of $u$ is needed.

Fix $0<\varepsilon<\norm{\rho}_{L^\infty(\Omega)}$ and define
\[
\Omega_\varepsilon=\{\rho>\varepsilon\},\qquad h=\rho-\varepsilon.
\]
Since $\rho=0$ on $\partial\Omega$, the set $\Omega_\varepsilon$ is compactly contained in $\Omega$. Let $\beta>2$ be fixed later, and choose $x_0\in\Omega_\varepsilon$ such that
\begin{equation}
h(x_0)^\beta Z(x_0)=\max_{\Omega_\varepsilon}h^\beta Z.
\label{eq:x0-max}
\end{equation}
Write
\[
r=h(x_0)>0.
\]
After increasing the prescribed slope bound, assume $M\ge1$. Let $E_r$ be the component containing $x_0$ of
\[
\{x\in\Omega_\varepsilon:r/2<h(x)<2r\}.
\]
Since $\abs{D\rho}\le M$ and $\rho=0$ on $\partial\Omega$,
\begin{equation}
\dist(x,\partial\Omega)\ge\frac{\rho(x)}{M}>\frac{r}{2M}\qquad(x\in E_r).
\label{eq:dist-boundary}
\end{equation}
Therefore every $y$ with $\abs{y-x}<r/(2M)$ belongs to $\Omega$.

Define
\begin{equation*}
D_r=\left\{(x,y):x\in E_r,\ \abs{y-x}<\frac{r}{2M}\right\}
\end{equation*}
and
\begin{equation}
\eta(x,y)=w(y)-u(x)-\varepsilon-\frac r2-\frac r{32}-\frac{8M^2}{r}\abs{y-x}^2.
\label{eq:eta}
\end{equation}
Let $D_r^+$ be the connected component of $D_r\cap\{\eta>0\}$ that contains $(x_0,x_0)$. On $D_r^+$, consider
\begin{equation}
P(x,y)=G(x)+\frac\beta2\log\eta(x,y).
\label{eq:P}
\end{equation}

\begin{lemma}\label{lem:interior-max}
The function $P$ attains its maximum at an interior point $(x_*,y_*)\in D_r$ with $\eta(x_*,y_*)>0$. Moreover,
\begin{equation}
0<\eta(x,y)\le h(x)-\frac r2\le\frac{3r}{2}
\label{eq:eta-bounds}
\end{equation}
on $D_r^+$, and
\begin{equation}
Dw(y_*)=\frac{16M^2}{r}(y_*-x_*),\qquad
\abs{y_*-x_*}\le\frac{r}{16M}.
\label{eq:ycritical}
\end{equation}
\end{lemma}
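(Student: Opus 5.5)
The plan is to get the upper bound \eqref{eq:eta-bounds} from the slope bound alone, and then use it together with the maximality \eqref{eq:x0-max} to rule out every piece of $\partial D_r^+$ as a location of the maximum. The critical-point identity \eqref{eq:ycritical} then follows from differentiating $P$ in $y$.

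\emph{Upper bound.} For $(x,y)\in D_r$ the segment $[x,y]$ lies in $\Omega$ by \eqref{eq:dist-boundary}, so $w(y)\le w(x)+M\abs{y-x}$. Recall $w(x)-u(x)-\varepsilon=h(x)$ and set $s=\abs{y-x}$. Then
\[
\eta(x,y)\le h(x)-\frac r2-\frac r{32}+Ms-\frac{8M^2}{r}s^2\le h(x)-\frac r2 ,
\]
because $\max_{s\ge0}\bigl(Ms-8M^2s^2/r\bigr)=r/32$. Since $h<2r$ on $E_r$, this gives \eqref{eq:eta-bounds}. Also $\eta(x_0,x_0)=r-r/2-r/32>0$, so $D_r^+$ is nonempty.

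\emph{Interior maximum.} The closure of $E_r$ is a compact subset of $\Omega$, and $u\in C^4$, so $G$ is bounded above on it and $P$ is upper semicontinuous up to $\overline{D_r^+}$. Since $P\to-\infty$ as $\eta\to0^+$, it suffices to show that no boundary point of $D_r^+$ with $\eta>0$ can carry the supremum. Such a point lies on $\partial D_r$, so it falls into one of three cases.
\begin{itemize}
\item[(i)] $\abs{y-x}=r/(2M)$. Taking $s=r/(2M)$ in the first estimate gives $\eta\le h-r/32-2r<0$, which is impossible.
\item[(ii)] $x\in\partial E_r$ with $h(x)=r/2$. Note that $\partial E_r$ lies in $\{h=r/2\}\cup\{h=2r\}$, since $h=0<r/2$ on $\partial\Omega_\varepsilon$. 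Here the upper bound forces $\eta\le0$, so this case is also excluded.
\item[(iii)] $x\in\partial E_r$ with $h(x)=2r$. By continuity, \eqref{eq:x0-max} gives $G(x)\le G(x_0)+\beta\log(r/h(x))=G(x_0)-\beta\log2$. Combining this with $\eta\le 3r/2$ yields
\[
P(x,y)\le G(x_0)+\frac\beta2\log\frac{3r}{8}<G(x_0)+\frac\beta2\log\frac{15r}{32}=P(x_0,x_0).
\]
\end{itemize}
Hence the supremum is attained at an interior point $(x_*,y_*)$ with $\eta>0$.

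\emph{Critical-point identity.} At $(x_*,y_*)$ we have $D_yP=0$, i.e.\ $D_y\eta=0$. This reads $Dw(y_*)=\frac{16M^2}{r}(y_*-x_*)$. Taking norms and using $\abs{Dw}\le M$ gives $\abs{y_*-x_*}\le r/(16M)$.

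The only delicate step is case (iii), the outer level set. There the comparison with $(x_0,x_0)$ needs the exact constants $3/8<15/32$, which the offset $r/32$ in \eqref{eq:eta} was chosen to produce.
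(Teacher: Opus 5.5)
Your proof is correct and follows the paper's argument essentially step for step. You get \eqref{eq:eta-bounds} from the slope bound together with $\max_s(Ms-8M^2s^2/r)=r/32$, exclude the three boundary pieces $\abs{y-x}=r/(2M)$, $h=r/2$ and $h=2r$ (the last via \eqref{eq:x0-max} and $3/8<15/32$, which is the paper's factor $(4/5)^{\beta/2}$), and read off \eqref{eq:ycritical} from $D_y\eta=0$. The only cosmetic difference is that you maximize the upper semicontinuous $P$ over $\overline{D_r^+}$ and check that boundary points of that component with $\eta>0$ lie on $\partial D_r$, whereas the paper maximizes $Z\max\{\eta,0\}^{\beta/2}$ over all of $\overline D_r$; your version matches the statement's domain $D_r^+$ a little more literally.
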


\begin{proof}
At the diagonal pair $(x_0,x_0)$,
\[
\eta(x_0,x_0)=r-\frac r2-\frac r{32}=\frac{15r}{32}>0,
\]
so $D_r^+$ is nonempty and the weighted quantity used below is positive somewhere.

For \eqref{eq:eta-bounds},  Fix $(x,y)\in D_r$.  By \eqref{eq:dist-boundary}, the Euclidean ball $B_{r/(2M)}(x)$ lies in $\Omega$, so the line segment from $x$ to $y$ remains inside $\Omega$.  The gradient bound for $w$ gives
\[
w(y)-w(x)\le M\abs{y-x}.
\]
Since $h(x)=w(x)-u(x)-\varepsilon$, putting $t=\abs{y-x}$ gives
\begin{align*}
\eta(x,y)
&\le h(x)+Mt-\frac r2-\frac r{32}-\frac{8M^2}{r}t^2\\
&\le h(x)-\frac r2,
\end{align*}
because the quadratic polynomial $Mt-8M^2t^2/r$ has maximum $r/32$, attained at $t=r/(16M)$.  Since $h<2r$ on $E_r$, this also gives $\eta<3r/2$.

It remains to locate the maximum in the interior.  The closure of $E_r$ is compactly contained in $\Omega$.  Indeed, on $\overline E_r$ one has $\rho=\varepsilon+h\ge r/2$, while $\abs{D\rho}\le M$ and $\rho=0$ on $\partial\Omega$ imply
\[
\dist(x,\partial\Omega)\ge\frac{\rho(x)}{M}\ge\frac{r}{2M}.
\]
Thus the closed set
\[
\overline D_r
=\left\{(x,y):x\in\overline E_r,
\ \abs{y-x}\le\frac{r}{2M}\right\}
\]
is compact in $\Omega\times\Omega$.  The function
\[
\Psi(x,y)=Z(x)\max\{\eta(x,y),0\}^{\beta/2}
\]
is continuous on $\overline D_r$ and satisfies $\Psi(x_0,x_0)>0$, so it attains a positive maximum.

No boundary point can be a maximizer.  If $h=r/2$, then the first estimate above gives $\eta\le0$.  If $\abs{y-x}=r/(2M)$, then
\[
Mt=\frac r2,
\qquad
\frac{8M^2}{r}t^2=2r,
\]
and hence, using $h\le2r$ on $\overline E_r$,
\[
\eta\le h-\frac r2-\frac r{32}+\frac r2-2r\le-\frac r{32}<0.
\]
The remaining part of $\partial E_r$ is $h=2r$.  By the maximality of $x_0$ in \eqref{eq:x0-max},
\[
Z(x)\le2^{-\beta}Z(x_0).
\]
Moreover $\eta(x,y)\le3r/2$, while $\eta(x_0,x_0)=15r/32$.  Therefore
\[
\frac{\Psi(x,y)}{\Psi(x_0,x_0)}
\le 2^{-\beta}\left(\frac{16}{5}\right)^{\beta/2}
=\left(\frac45\right)^{\beta/2}<1.
\]
Hence the positive maximum lies at an interior point $(x_*,y_*)$ with $\eta(x_*,y_*)>0$.  Maximizing $\Psi$ is equivalent there to maximizing $P=\log Z+(\beta/2)\log\eta$.

Finally differentiate $P$ in the $y$ variables.  Since $G$ depends only on $x$, the $y$-gradient of $\eta$ must vanish at the maximum:
\[
0=D_y\eta
 =Dw(y_*)-\frac{16M^2}{r}(y_*-x_*).
\]
Hence the first identity in \eqref{eq:ycritical} holds.  Since $\abs{Dw(y_*)}\le M$,
\[
\abs{y_*-x_*}
=\frac{r}{16M^2}\abs{Dw(y_*)}
\le\frac{r}{16M},
\]
which is the second identity.
\end{proof}

At the product-space maximum, compare the tangent spaces of $M_u$ at $(x_*,u(x_*))$ and $M_w$ at $(y_*,w(y_*))$. Let
\[
c=\inn{\nu_u}{\nu_w}.
\]
The downward graph normals lie in a fixed spherical cap, and therefore
\begin{equation}
1+c\ge\frac{2}{1+M^2}>0.
\label{eq:angle-lower}
\end{equation}
The natural rotation from $T_{(x_*,u(x_*))}M_u$ to $T_{(y_*,w(y_*))}M_w$ is the isometry
\begin{equation}
RZ=Z-\frac{\inn{\nu_w}{Z}}{1+c}(\nu_u+\nu_w).
\label{eq:natural-rotation}
\end{equation}
It is the restriction of the ambient rotation that sends $\nu_u$ to $\nu_w$ in their two-plane and fixes its orthogonal complement.

Let $\{e_i\}$ be an orthonormal principal frame for $A_u$ at $(x_*,u(x_*))$, and set $e_i'=Re_i$. For each $i$, take unit-speed geodesics on $M_u$ and $M_w$ with initial velocities $e_i$ and $e_i'$, respectively. Their horizontal projections are denoted by $x_i(s)$ and $y_i(s)$. We write
\[
\eta_i=\left.\frac{d}{ds}\eta(x_i(s),y_i(s))\right|_{s=0},\qquad
\eta_i''=\left.\frac{d^2}{ds^2}\eta(x_i(s),y_i(s))\right|_{s=0}.
\]

The following lemma is the $\sigma_k$ version of the paired-geodesic comparison used in \cite[Section~4.2]{QiuYan2026}.  The natural rotation identifies the two tangent spaces, and the mixed G\aa rding inequality controls the contribution of the upper graph using only its $\Gamma_k$-admissibility.

\begin{lemma}\label{lem:paired-second}
At the interior maximum of $P$, let
\begin{equation*}
S=\sum_iF^{ii}\eta_i^2.
\end{equation*}
Then
\begin{equation}
\abs{\eta_i}\le C(M),
\label{eq:eta-first-bound}
\end{equation}
and
\begin{equation}
\sum_iF^{ii}\eta_i''\ge -C-C\frac{S}{r}.
\label{eq:eta-second-bound}
\end{equation}
The constants are independent of $D^2w(y_*)$ and of $\inf\sigma_k(A_w)$.
\end{lemma}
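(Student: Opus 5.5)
We compute $\eta_i$ and $\eta_i''$ directly along the paired geodesics $X_i(s)=(x_i(s),u(x_i(s)))$ on $M_u$ and $Y_i(s)=(y_i(s),w(y_i(s)))$ on $M_w$. Write $\eta$ in ambient form. With $E=-e_{n+1}$ and $X_i=(x_i,u)$, we have $u(x_i)=-\inn{X_i}{e_{n+1}}$, and similarly for $w$. Hence $\eta=\inn{X_i-Y_i}{e_{n+1}}-\frac{8M^2}{r}\abs{\pi(Y_i-X_i)}^2-\text{const}$, where $\pi$ is the horizontal projection.

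\textbf{First derivatives.} Differentiating once gives
\[
\eta_i=\inn{e_i-e_i'}{e_{n+1}}-\frac{16M^2}{r}\inn{y_*-x_*}{\pi(e_i'-e_i)}.
\]
By \eqref{eq:ycritical} the second term equals $-\inn{Dw(y_*)}{\pi(e_i'-e_i)}$, which is bounded by $2M$. The first term is bounded by $2$, which proves \eqref{eq:eta-first-bound}. No second derivatives of $w$ enter here.

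\textbf{Second derivatives.} We use the geodesic equations $X_i''=-\kappa_i\nu_u$ and $Y_i''=-h^w(e_i',e_i')\nu_w$, where $h^w$ is the second fundamental form of $w$ with the same sign convention. Differentiating twice gives
\[
\eta_i''=-\kappa_i\inn{\nu_u}{e_{n+1}}+h^w(e_i',e_i')\inn{\nu_w}{e_{n+1}}
-\frac{8M^2}{r}\Big(2\abs{\pi(e_i'-e_i)}^2+2\inn{\pi(y_*-x_*)}{\pi(-h^w_{ii}\nu_w+\kappa_i\nu_u)}\Big).
\]
Since $\inn{\nu_v}{e_{n+1}}=-W_v^{-1}$, and using \eqref{eq:ycritical} once more, the $h^w$ and $\kappa_i$ terms recombine into
\[
h^w_{ii}\Big(-\tfrac{1}{W_w}+\inn{Dw(y_*)}{\pi\nu_w}\Big)-\kappa_i\Big(-\tfrac{1}{W_u}+\inn{Dw(y_*)}{\pi\nu_u}\Big).
\]
Here $\pi\nu_w=Dw/W_w$, so the $w$-coefficient equals $-(1-\abs{Dw}^2)/W_w+\dots$. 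The plan is to check carefully that the coefficient of $h^w_{ii}$ is $\alpha W_w$ times a fixed positive factor $\alpha$, with $\alpha$ independent of $i$. Then $\sum_iF^{ii}\eta_i''$ contains $\alpha\sum_iF^{ii}h^w(e_i',e_i')=\alpha\,\mathrm{tr}(\dot\sigma_k(A_u)\,R^{*}A_wR)$, which is the mixed contraction. The $\kappa_i$ part gives $-\beta'\sum F^{ii}\kappa_i=-\beta'kf$, which is bounded.

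\textbf{Sign of the mixed term.} By concavity and $1$-homogeneity of $\sigma_k^{1/k}$ on $\Gamma_k$, and since $R^*A_wR$ has eigenvalues in $\Gamma_k$, we have
\[
\sum F^{ii}(R^*A_wR)_{ii}\ge k\sigma_k(A_u)^{1-1/k}\sigma_k(A_w)^{1/k}\ge0.
\]
This is G\aa rding's mixed inequality. So the $w$-term has a favorable sign as long as $\alpha\ge0$. This gives independence from $D^2w$ and from $\inf\sigma_k(A_w)$.

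\textbf{Main obstacle and remaining errors.} The remaining error is the term $-\frac{16M^2}{r}\sum F^{ii}\abs{\pi(e_i'-e_i)}^2$. This is the main obstacle: it must be bounded by $C+CS/r$, and it is not a priori controlled by $S$. The first thing to try is to exploit the criticality of $P$ in $x$ and $y$ together with \eqref{eq:ycritical}. The idea is to show that $\pi(e_i'-e_i)$ is a bounded linear combination of $\eta_i$ plus a uniformly bounded vector, using the explicit rotation \eqref{eq:natural-rotation}: $e_i'-e_i=-\frac{\inn{\nu_w}{e_i}}{1+c}(\nu_u+\nu_w)$ together with \eqref{eq:angle-lower}. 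Then $\abs{\pi(e_i'-e_i)}^2\le C(\eta_i^2+\abs{\nu_u-\nu_w}^2)$.

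The leftover $\sum F^{ii}\abs{\nu_u-\nu_w}^2/r$ should be handled by the sign of $\alpha$. If that fails, we follow Qiu--Yan and replace it by the identity $\frac{r}{16M^2}Dw(y_*)=y_*-x_*$. If necessary, we also bound $\sum F^{ii}$ by $C(1+Q/a^2)$ and absorb it later in the final maximum-principle argument. We expect this step, choosing the pairing so that the cross terms collapse into $S/r$, to be the delicate one.
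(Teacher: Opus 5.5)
\textbf{There is a genuine gap.} Your overall plan matches the paper: paired geodesics under the natural rotation, the geodesic equations, the critical relation $\frac{16M^2}{r}(y_*-x_*)=Dw(y_*)$, and the mixed G\aa rding inequality. Your crude bound $\abs{\eta_i}\le 2+2M$ is also fine. However, the two steps that carry the lemma are left open, and one of them is computed incorrectly.

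\textbf{Sign error in the second variation.} Since $X_i=(x_i,u(x_i))$, one has $u(x_i)=\inn{X_i}{e_{n+1}}$, not $-\inn{X_i}{e_{n+1}}$. This flips the height contribution. It is why your coefficient of $b_i=A_w(e_i',e_i')$ came out as $(\abs{Dw}^2-1)/W_w$, which has no sign, so the mixed G\aa rding step could not be used. With the correct sign, the height part is $b_i/W_w-\kappa_i/W_u$. Your acceleration part of the cost is correct. Using $1+\abs{Dw}^2=W_w^2$ and $1+Du\cdot Dw=cW_uW_w$, the two parts combine to exactly
\[
\eta_i''=W_w\big(b_i-c\kappa_i\big)-\frac{16M^2}{r}\abs{B_i}^2,\qquad B_i=\pi(e_i'-e_i).
\]
So your factor $\alpha$ is $1$, and the $\kappa_i$-part traces to $-cW_wkf$.

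\textbf{The cost term is controlled by $S$ exactly.} You asserted it is ``not a priori controlled by $S$,'' but it is, and the key identity is missing from your proposal. Insert the critical relation into your first-derivative formula:
\[
\eta_i=Dw(y_*)\cdot\pi e_i'-Du(x_*)\cdot\pi e_i-Dw(y_*)\cdot\pi(e_i'-e_i)=(Dw(y_*)-Du(x_*))\cdot\pi e_i=W_w\inn{\nu_w}{e_i}.
\]
The rotation formula then reads $e_i'-e_i=-\frac{\eta_i}{W_w(1+c)}(\nu_u+\nu_w)$. Hence, by \eqref{eq:angle-lower},
\[
\abs{B_i}^2\le\abs{e_i'-e_i}^2=\frac{2\eta_i^2}{W_w^2(1+c)}\le C(M)\eta_i^2 ,
\]
and the cost term is at least $-CS/r$ with no remainder.

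\textbf{Why your bound does not close.} Your proposed bound $\abs{B_i}^2\le C(\eta_i^2+\abs{\nu_u-\nu_w}^2)$ leaves an error $-\frac{C}{r}\sum_iF^{ii}$. This is not of the form in \eqref{eq:eta-second-bound}, and it cannot be absorbed later.
\begin{itemize}
\item Your fallback $\sum_iF^{ii}\le C(1+Q/a^2)$ is false. Already for $k=2$ and $\kappa\approx(a,\epsilon,\dots,\epsilon)$ with $\sigma_2=f$, one has $\sum_iF^{ii}\approx(n-1)a$ while $Q/a^2\approx f/a$.
\item In the maximum-principle closure, the leftover would enter as roughly $-C\beta a/(r\eta)$. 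Since $r\eta\lesssim r^2$, this dominates the good term $kf_0a$ once $r$ is small.
\item Appealing to the sign of the $b_i$-coefficient does not help, because that factor does not multiply the cost term.
\item Criticality in $x$ is not needed; only $y$-criticality enters.
\end{itemize}
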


\begin{proof}
Write the paired unit tangent vectors in graph coordinates as
\[
e_i=(X_i,Du(x_*)\cdot X_i),
\qquad
e_i'=(Y_i,Dw(y_*)\cdot Y_i),
\]
and set $B_i=Y_i-X_i$.  Since $D_y\eta(x_*,y_*)=0$, \eqref{eq:ycritical} gives
\[
Dw(y_*)=\frac{16M^2}{r}(y_*-x_*).
\]
Differentiating $\eta$ once along the paired geodesics therefore yields
\begin{align*}
\eta_i
&=Dw(y_*)\cdot Y_i-Du(x_*)\cdot X_i
 -\frac{16M^2}{r}(y_*-x_*)\cdot(Y_i-X_i)\\
&=(Dw(y_*)-Du(x_*))\cdot X_i.
\end{align*}
Because $\nu_w=(Dw,-1)/W_w$,
\[
W_w\inn{\nu_w}{e_i}
=Dw\cdot X_i-Du\cdot X_i,
\]
and hence
\[
\eta_i=W_w\inn{\nu_w}{e_i}.
\]
Thus $\abs{\eta_i}\le W_w\le\sqrt{1+M^2}$, which proves \eqref{eq:eta-first-bound}.

Next use the explicit formula for the natural rotation.  Since $e_i'=Re_i$ and $e_i\perp\nu_u$,
\[
e_i'-e_i
=-\frac{\inn{\nu_w}{e_i}}{1+c}(\nu_u+\nu_w)
=-\frac{\eta_i}{W_w(1+c)}(\nu_u+\nu_w).
\]
The vector $B_i$ is the horizontal projection of $e_i'-e_i$, so
\begin{align}
\abs{B_i}^2
&\le\abs{e_i'-e_i}^2
 =\frac{\eta_i^2}{W_w^2(1+c)^2}\abs{\nu_u+\nu_w}^2
 =\frac{2\eta_i^2}{W_w^2(1+c)}
\le C(M)\eta_i^2.
\label{eq:Bi-bound}
\end{align}
Here we used \eqref{eq:angle-lower}.

For the second derivative of $\eta$, put
\[
\alpha=\frac{8M^2}{r},
\qquad p=y_*-x_*.
\]
Let
\[
b_i=A_w(e_i',e_i').
\]
Because the curves on the two graphs are unit-speed geodesics, their ambient accelerations at $s=0$ are
\[
-\kappa_i\nu_u,
\qquad
-b_i\nu_w.
\]
Taking horizontal components gives
\[
x_i''(0)=-\frac{\kappa_i}{W_u}Du(x_*),
\qquad
y_i''(0)=-\frac{b_i}{W_w}Dw(y_*),
\]
while the second derivative of the vertical difference is
\[
\frac{d^2}{ds^2}\big(w(y_i(s))-u(x_i(s))\big)\Big|_{s=0}
=\frac{b_i}{W_w}-\frac{\kappa_i}{W_u}.
\]
The second derivative of the quadratic cost is
\[
-2\alpha\left(\abs{B_i}^2+p\cdot(y_i''(0)-x_i''(0))\right).
\]
Since the $y$-critical relation is $2\alpha p=Dw(y_*)$, the acceleration part of the cost becomes
\begin{align*}
-2\alpha p\cdot(y_i''-x_i'')
&=-Dw\cdot\left(-\frac{b_i}{W_w}Dw
 +\frac{\kappa_i}{W_u}Du\right)\\
&=\frac{b_i}{W_w}\abs{Dw}^2
 -\frac{\kappa_i}{W_u}Dw\cdot Du.
\end{align*}
Combining these formulas and using
\[
W_w^2=1+\abs{Dw}^2,
\qquad
c=\inn{\nu_u}{\nu_w}
 =\frac{1+Du\cdot Dw}{W_uW_w},
\]
we obtain
\begin{align*}
\eta_i''
&=\frac{b_i}{W_w}-\frac{\kappa_i}{W_u}
 +\frac{b_i}{W_w}\abs{Dw}^2
 -\frac{\kappa_i}{W_u}Dw\cdot Du
 -2\alpha\abs{B_i}^2\\
&=W_w\big(b_i-c\kappa_i\big)
 -\frac{16M^2}{r}\abs{B_i}^2.
\end{align*}

Taking the $F$-trace,  Since $\{e_i\}$ diagonalizes $A_u$,
\[
\sum_iF^{ii}b_i
=D\sigma_k(A_u):R^*A_wR,
\qquad
\sum_iF^{ii}\kappa_i=kf.
\]
Therefore
\begin{equation}
\sum_iF^{ii}\eta_i''
=W_w\left\{D\sigma_k(A_u):R^*A_wR-ckf\right\}
 -\frac{16M^2}{r}\sum_iF^{ii}\abs{B_i}^2.
\label{eq:eta-second-trace}
\end{equation}
Because $R$ is an isometry and $A_w\in\Gamma_k$, the conjugate $R^*A_wR$ also lies in the matrix G\aa rding cone.  Concavity and one-homogeneity of $\sigma_k^{1/k}$ give the mixed G\aa rding inequality
\[
D\sigma_k(A_u):R^*A_wR
\ge k\sigma_k(A_u)^{(k-1)/k}\sigma_k(A_w)^{1/k}\ge0.
\]
Only the nonnegativity of this term is used; no positive lower bound for $\sigma_k(A_w)$ is required.  The remaining zero-order term satisfies
\[
-W_wckf\ge-C(n,k,M,f_1),
\]
while \eqref{eq:Bi-bound} gives
\[
-\frac{16M^2}{r}\sum_iF^{ii}\abs{B_i}^2
\ge-\frac{C(M)}{r}\sum_iF^{ii}\eta_i^2
=-C\frac{S}{r}.
\]
Combining the last three estimates proves \eqref{eq:eta-second-bound}.
\end{proof}

\begin{remark}\label{rem:natural-rotation}
The natural rotation is essential. A fixed-coordinate contraction involving $D^2w$ at the same base point can have the wrong sign and arbitrarily large magnitude even for admissible $\Gamma_2$ jets with bounded slopes; see \cite[Remark 3.3]{QiuYan2026}. No estimate of such a fixed-coordinate contraction is used.  Instead, the second variation produces the geometrically invariant expression in \eqref{eq:eta-second-trace}.
\end{remark}

\subsection{Maximum-principle closure}

All quantities in this subsection are evaluated at the product-space maximum $(x_*,y_*)$. The first derivative of \eqref{eq:P} in the paired directions yields
\begin{equation}
G_i=-\frac\beta{2\eta}\eta_i.
\label{eq:first-P}
\end{equation}
The second derivative, traced with $F^{ii}$, and \cref{lem:paired-second} yield
\begin{equation}
0\ge \cL G-C\frac\beta\eta-C\frac\beta{\eta^2}S.
\label{eq:second-P}
\end{equation}
Indeed, the term $S/(r\eta)$ is bounded by $3S/(2\eta^2)$ because $\eta\le3r/2$.

\begin{proposition}\label{prop:contact}
Under the assumptions of either \cref{thm:main} or \cref{thm:gradient}, $\beta$ can be fixed in terms of the corresponding theorem data so that
\begin{equation}
a(x_*)\eta(x_*,y_*)\le C,
\label{eq:contact}
\end{equation}
with $C$ independent of $\varepsilon$ and $r$.
\end{proposition}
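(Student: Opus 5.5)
We insert the Jacobi inequality for $G$ into \eqref{eq:second-P}, and use \eqref{eq:first-P} to turn the gradient terms into multiples of $S/\eta^2$. We may assume $a(x_*)$ is large; otherwise \eqref{eq:smalllevel-fail} gives \eqref{eq:contact}, since $\eta\le 3r/2$ and $r\le\norm{\rho}_{L^\infty}$. In the large branch \cref{thm:yan} applies, so \eqref{eq:LG-position} or \eqref{eq:LG-general} holds at $x_*$.

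By \eqref{eq:first-P}, $F^{ii}G_i^2=\frac{\beta^2}{4\eta^2}F^{ii}\eta_i^2$, and \eqref{eq:eta-first-bound} gives $\abs{\eta_i}\le C(M)$. Applying \cref{lem:good-bad}, the gradient part of $\cL G$ is bounded below by
\[
c_G\frac{\beta^2}{4\eta^2}\sum_{i\in\cG}F^{ii}\eta_i^2-C_B\frac{\beta^2}{4\eta^2}C(M)^2\sum_{i\in\cB}F^{ii}.
\]
\eqref{eq:second-P} contains the loss $C\beta S/\eta^2$. We split $S$ into its good and bad parts.
\begin{itemize}
\item The good part of $S$ is absorbed by the positive good term once $c_G\beta^2/4\ge C\beta$, that is, for $\beta$ large. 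The constant $c_G$ depends on $q$, which is fixed by \cref{lem:weights-position} or \cref{lem:weights-general} independently of $\beta$.
\item The bad part, together with the bad term from \cref{lem:good-bad}, is at most $C\beta^2\eta^{-2}\sum_{i\in\cB}F^{ii}$.
\end{itemize}
By \eqref{eq:Q-bad} this is at most $C\beta^2(a\eta)^{-2}Q$. For $f=f(x,u)$ we then obtain
\[
0\ge kfa+\Big((q-1)-\frac{C\beta^2}{(a\eta)^2}\Big)Q-C(1+q)-\frac{C\beta}{\eta}.
\]
If $a\eta\le C\beta$ we are done. Otherwise the $Q$ coefficient is at least $(q-1)/2$, so $kf_0a\le C+C\beta/\eta$. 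Multiplying by $\eta$ and using $\eta\le 3r/2\le C$ gives $a\eta\le C$.

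In the gradient-dependent case two extra points arise.
\begin{itemize}
\item The linear loss $-Ca$ is absorbed by \eqref{eq:linear-absorb}, leaving $c_0a+\mu Q$.
\item The drift $D_\nu\whf(\nabla G)$ is bounded by $C\abs{\nabla G}$. We estimate $\abs{G_i}\le C\beta/\eta$ directly from \eqref{eq:first-P} and \eqref{eq:eta-first-bound}, so the drift costs at most $C\beta/\eta$. This is of the same order as the error already present and is handled in the same way after multiplying by $\eta$.
\end{itemize}
For $k>n/2$ all directions are good by \eqref{eq:all-good}, so no bad-set argument is needed. For $k=n/2$ the good/bad argument above applies with $\mu Q$ in place of $(q-1)Q$.

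The main obstacle is the order of constant choices. $q$ and $\gamma$ are fixed first; $c_G$, $C_B$, $\mu$ and the constants in \cref{lem:paired-second} must not depend on $\beta$; $\beta$ is then chosen from the good-direction absorption; and the final dichotomy $a\eta\lessgtr C\beta$ handles the bad directions. I would check in particular that $C_B$ is independent of $\beta$, which \cref{lem:good-bad} guarantees. I would also check that the drift bound does not need a factor of $F^{ii}$: the drift enters only linearly, not through the $F$-weighted sum, so this holds. Finally, the constants do not depend on $\varepsilon$ or $r$, because every estimate uses only $\eta\le 3r/2$ and the lower bound $a\ge c$ from \eqref{eq:a-lower}.
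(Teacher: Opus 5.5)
Your proposal is correct and follows essentially the same route as the paper. It uses the small-level dichotomy, substitutes \eqref{eq:first-P} into \eqref{eq:second-P}, chooses $\beta$ to absorb the good directions, and controls the bad directions through \eqref{eq:Q-bad} under the dichotomy $a\eta\lessgtr K\beta$; in the gradient case it adds \eqref{eq:linear-absorb} and the drift bound $C\beta/\eta$, with the constants ordered ($q,\gamma$, then $\beta$, then $K$) exactly as in the paper.
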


\begin{proof}
If \eqref{eq:small-level} fails at $(x_*,y_*)$, then \eqref{eq:smalllevel-fail} gives a fixed upper bound for $a(x_*)$.  Since \eqref{eq:eta-bounds} implies $\eta\le3r/2\le(3/2)\norm{\rho}_{L^\infty}$, \eqref{eq:contact} follows immediately.  We may therefore assume throughout the remainder of the proof that \eqref{eq:small-level} holds.

At the product-space maximum, differentiating \eqref{eq:P} in the paired directions gives \eqref{eq:first-P}; hence
\[
G_i^2=\frac{\beta^2}{4\eta^2}\eta_i^2.
\]
Combine this relation with \eqref{eq:second-P} in the three parameter regimes.

\smallskip
\noindent\emph{Case 1: $f=f(x,u)$ and $2\le k<n$.}
By \eqref{eq:LG-position} and \cref{lem:good-bad},
\[
\cL G\ge kf_0a+(q-1)Q
+c_G\sum_{i\in\cG}F^{ii}G_i^2
-C_B\sum_{i\in\cB}F^{ii}G_i^2-C.
\]
Set
\[
S_G=\sum_{i\in\cG}F^{ii}\eta_i^2,
\qquad
S_B=\sum_{i\in\cB}F^{ii}\eta_i^2.
\]
Substitution into \eqref{eq:second-P} yields
\[
0\ge kf_0a+(q-1)Q
+\left(\frac{c_G\beta^2}{4}-C\beta\right)\frac{S_G}{\eta^2}
-C(\beta^2+\beta)\frac{S_B}{\eta^2}
-C\frac\beta\eta-C.
\]
Choose $\beta=\beta(n,k,M)>2$ so large that the coefficient of $S_G/\eta^2$ is nonnegative.  By \eqref{eq:eta-first-bound} and \eqref{eq:Q-bad},
\[
S_B\le C(M)\sum_{i\in\cB}F^{ii}
\le\frac{C(M)}{a^2}Q.
\]
There is a constant $K=K(n,k,M)$ such that, whenever $a\eta\ge K\beta$, the bad-direction term is bounded below by $-(q-1)Q/2$.  Indeed,
\[
C(\beta^2+\beta)\frac{S_B}{\eta^2}
\le C\frac{\beta^2}{a^2\eta^2}Q,
\]
and the coefficient on the right is as small as desired once $K$ is chosen large.  If $a\eta<K\beta$, \eqref{eq:contact} already holds.  Otherwise we may absorb the bad term and obtain
\[
0\ge kf_0a+\frac{q-1}{2}Q-C\frac\beta\eta-C.
\]
Dropping the nonnegative $Q$ term and multiplying by $\eta$ gives
\[
kf_0a\eta\le C\beta+C\eta.
\]
Since $\eta\le(3/2)\norm\rho_{L^\infty}$ and $\beta$ is fixed, this proves \eqref{eq:contact}.

\smallskip
\noindent\emph{Case 2: $f=f(x,u,Du)$ and $k>n/2$.}
By \cref{lem:weights-general,lem:good-bad}, after absorbing the linear loss as in \eqref{eq:linear-absorb},
\[
\cL G\ge c_0a+\mu Q+c_G\sum_iF^{ii}G_i^2
+D_\nu\whf(\nabla G)-C.
\]
The first derivative relation \eqref{eq:first-P} gives
\[
\nabla G=-\frac\beta{2\eta}\sum_i\eta_i e_i.
\]
Thus, using the $C^1$ bound for $\whf$ and \eqref{eq:eta-first-bound},
\[
D_\nu\whf(\nabla G)
=-\frac\beta{2\eta}\sum_iD_\nu\whf(e_i)\eta_i
\ge-C\frac\beta\eta.
\]
Moreover
\[
\sum_iF^{ii}G_i^2=\frac{\beta^2}{4\eta^2}S.
\]
Substitution into \eqref{eq:second-P} gives
\[
0\ge c_0a+\mu Q
+\left(\frac{c_G\beta^2}{4}-C\beta\right)\frac{S}{\eta^2}
-C\frac\beta\eta-C.
\]
Choose $\beta>2$ so that the coefficient of $S/\eta^2$ is nonnegative.  Dropping the nonnegative terms then yields
\[
0\ge c_0a-C\frac\beta\eta-C.
\]
As in Case 1, multiplication by $\eta$ and the bound for $\eta$ prove \eqref{eq:contact}.

\smallskip
\noindent\emph{Case 3: $f=f(x,u,Du)$ and $k=n/2$.}
The normal-drift term is estimated exactly as in Case 2.  Using \eqref{eq:goodbad-gradient} in place of \eqref{eq:all-good}, we obtain
\[
0\ge c_0a+\mu Q
+\left(\frac{c_G\beta^2}{4}-C\beta\right)\frac{S_G}{\eta^2}
-C(\beta^2+\beta)\frac{S_B}{\eta^2}
-C\frac\beta\eta-C.
\]
Fix $\beta>2$ so that the good-direction coefficient is nonnegative.  The estimate
\[
S_B\le\frac{C(M)}{a^2}Q
\]
is the same as in Case 1.  If $a\eta$ is already bounded by $K\beta$, there is nothing to prove.  Otherwise choose $K$ large enough that the bad-direction contribution is bounded below by $-\mu Q/2$.  We then get
\[
0\ge c_0a+\frac\mu2Q-C\frac\beta\eta-C,
\]
and the same final multiplication by $\eta$ proves \eqref{eq:contact}.
\end{proof}

\begin{proof}[Proof of \cref{thm:main,thm:gradient}]
Fix the value of $q$ prescribed by the corresponding case above, and choose $\beta>2$ as in \cref{prop:contact}.  Since $1\le W_u\le\sqrt{1+M^2}$,
\[
a\le Z=aW_u^q\le C(M,q)a.
\]
At the product-space maximum, \cref{prop:contact} gives $a\eta\le C$, while \eqref{eq:eta-bounds} gives $\eta\le3r/2$.  Hence
\begin{align*}
Z(x_*)\eta(x_*,y_*)^{\beta/2}
&\le C a(x_*)\eta(x_*,y_*)^{\beta/2}\\
&=C\big(a\eta\big)(x_*,y_*)
  \eta(x_*,y_*)^{\beta/2-1}\\
&\le C r^{\beta/2-1}.
\end{align*}
The exponent is positive because $\beta>2$.

The maximality of $P$ and the diagonal value
\[
\eta(x_0,x_0)=\frac{15r}{32}
\]
therefore imply
\[
Z(x_0)\left(\frac{15r}{32}\right)^{\beta/2}
\le Z(x_*)\eta(x_*,y_*)^{\beta/2}
\le Cr^{\beta/2-1}.
\]
After cancelling $r^{\beta/2}$ we obtain
\[
rZ(x_0)\le C.
\]
By the definition of $x_0$ in \eqref{eq:x0-max},
\begin{align*}
\sup_{\Omega_\varepsilon}h^\beta a
&\le\sup_{\Omega_\varepsilon}h^\beta Z
 =r^\beta Z(x_0)\\
&\le Cr^{\beta-1}
 \le C\norm{\rho}_{L^\infty(\Omega)}^{\beta-1}.
\end{align*}
All constants are independent of $\varepsilon$.

Fix $x\in\Omega$.  Since $\rho(x)>0$, for every sufficiently small $\varepsilon>0$ one has $x\in\Omega_\varepsilon$, and $h(x)=\rho(x)-\varepsilon\to\rho(x)$.  Letting $\varepsilon\downarrow0$ in the preceding estimate gives
\[
\rho(x)^\beta\kappa_1[u](x)\le C.
\]
Taking the supremum over $x$ yields
\[
\sup_\Omega\rho^\beta\kappa_1[u]\le C.
\]
Finally, because $k\ge2$ and $A_u\in\Gamma_k\subset\Gamma_2$, \cref{lem:cone} gives
\[
\abs{A_u}<H\le n\kappa_1[u].
\]
Therefore
\[
\sup_\Omega\rho^\beta\abs{A_u}\le C.
\]
The preceding proof also shows the stated dependence of the constants.  In particular, the comparison graph enters at second order only in \cref{lem:paired-second}, where the mixed G\aa rding inequality uses merely $A_w\in\Gamma_k$ and never a quantitative lower bound for $\sigma_k(A_w)$ or an upper bound for $D^2w$.
\end{proof}

\subsection{Remarks on the assumptions and the range of the method}

The argument uses essentially positivity of $f$, full $\Gamma_k$-admissibility of both graphs, a strict interior gap that vanishes on the boundary, and uniform slope bounds. Merely assuming $\sigma_k(A_w)>0$ is not enough for the mixed G\aa rding step, because the full matrix must remain in the G\aa rding cone.

For a general $f(x,u,Du)$, the current argument closes in the range $k\ge n/2$. When $k<n/2$, Yan's admissible parameter satisfies $\gamma<1$, while the coefficient $q$ needed to absorb a large normal-Hessian loss cannot be increased without destroying the first-direction square completion. The position-dependent theorem avoids this conflict because the $C^{1,1}$ source error is bounded rather than linear in $a$.


\begin{thebibliography}{99}
\bibitem{CNS1984}
L.~A.~Caffarelli, L.~Nirenberg, and J.~Spruck,
\newblock The Dirichlet problem for nonlinear second-order elliptic equations. I. Monge--Amp\`ere equation,
\newblock \emph{Comm. Pure Appl. Math.} \textbf{37} (1984), 369--402.
\bibitem{CNS1985}
L.~A.~Caffarelli, L.~Nirenberg, and J.~Spruck,
\newblock The Dirichlet problem for nonlinear second-order elliptic equations. III. Functions of the eigenvalues of the Hessian,
\newblock \emph{Acta Math.} \textbf{155} (1985), 261--301.
\bibitem{CNS1986}
L.~A.~Caffarelli, L.~Nirenberg, and J.~Spruck,
\newblock Nonlinear second order elliptic equations. IV. Starshaped compact Weingarten hypersurfaces,
\newblock in \emph{Current Topics in Partial Differential Equations}, Kinokuniya, Tokyo, 1986, 1--26.
\bibitem{CNS1988}
L.~A.~Caffarelli, L.~Nirenberg, and J.~Spruck,
\newblock Nonlinear second-order elliptic equations. V. The Dirichlet problem for Weingarten hypersurfaces,
\newblock \emph{Comm. Pure Appl. Math.} \textbf{41} (1988), 47--70.
\bibitem{ChenTuXiang2021}
L.~Chen, Q.~Tu, and N.~Xiang,
\newblock Pogorelov type estimates for a class of Hessian quotient equations,
\newblock \emph{J. Differential Equations} \textbf{282} (2021), 272--284.
\bibitem{ChenLuTuXiang2021}
X.~Chen, W.~Lu, Q.~Tu, and N.~Xiang,
\newblock Pogorelov estimates for a class of fully nonlinear equations,
\newblock \emph{Nonlinear Anal.} \textbf{212} (2021), Paper No.~112482.
\bibitem{ChouWang2001}
K.-S.~Chou and X.-J.~Wang,
\newblock A variational theory of the Hessian equation,
\newblock \emph{Comm. Pure Appl. Math.} \textbf{54} (2001), 1029--1064.
\bibitem{Fan2026}
Z.~Fan,
\newblock Hessian estimates for the $\sigma_2$ equation with variable right-hand side terms in dimension $4$,
\newblock \emph{Adv. Math.} \textbf{494} (2026), Paper No.~110953.
\bibitem{Garding1959}
L.~G\aa rding,
\newblock An inequality for hyperbolic polynomials,
\newblock \emph{J. Math. Mech.} \textbf{8} (1959), 957--965.
\bibitem{GuanQiu2019}
P.~Guan and G.~Qiu,
\newblock Interior $C^2$ regularity of convex solutions to prescribing scalar curvature equations,
\newblock \emph{Duke Math. J.} \textbf{168} (2019), 1641--1663.
\bibitem{GuanRenWang2015}
P.~Guan, C.~Ren, and Z.~Wang,
\newblock Global $C^2$-estimates for convex solutions of curvature equations,
\newblock \emph{Comm. Pure Appl. Math.} \textbf{68} (2015), 1287--1325.
\bibitem{Ivochkina1990}
N.~M.~Ivochkina,
\newblock Solution of the Dirichlet problem for curvature equations of order $m$,
\newblock \emph{Math. USSR-Sb.} \textbf{67} (1990), 317--339.
\bibitem{JiaoJiao2024}
H.~Jiao and Y.~Jiao,
\newblock The Pogorelov estimates for degenerate curvature equations,
\newblock \emph{Int. Math. Res. Not. IMRN} \textbf{2024} (2024), no.~18, 12504--12529.
\bibitem{LiRenWang2019}
C.~Li, C.~Ren, and Z.~Wang,
\newblock Curvature estimates for convex solutions of some fully nonlinear Hessian-type equations,
\newblock \emph{Calc. Var. Partial Differential Equations} \textbf{58} (2019), Paper No.~188.
\bibitem{LiuRen2023}
Y.~Liu and C.~Ren,
\newblock Pogorelov type $C^2$ estimates for Sum Hessian equations and a rigidity theorem,
\newblock \emph{J. Funct. Anal.} \textbf{284} (2023), Paper No.~109726.
\bibitem{Lu2023}
S.~Lu,
\newblock Curvature estimates for semi-convex solutions of Hessian equations in hyperbolic space,
\newblock \emph{Calc. Var. Partial Differential Equations} \textbf{62} (2023), Paper No.~257.
\bibitem{LuTsai2026}
S.~Lu and Y.-L.~Tsai,
\newblock A simple proof of curvature estimates for the $n-1$ Hessian equation,
\newblock \emph{Proc. Amer. Math. Soc.} \textbf{154} (2026), 893--904.
\bibitem{Pogorelov1978}
A.~V.~Pogorelov,
\newblock \emph{The Minkowski Multidimensional Problem},
\newblock V.~H. Winston \& Sons, Washington, D.C.; Halsted Press, New York, 1978.
\bibitem{Qiu2024Curvature}
G.~Qiu,
\newblock Interior curvature estimates for hypersurfaces of prescribing scalar curvature in dimension three,
\newblock \emph{Amer. J. Math.} \textbf{146} (2024), 579--605.
\bibitem{Qiu2024Hessian}
G.~Qiu,
\newblock Interior Hessian estimates for $\sigma_2$ equations in dimension three,
\newblock \emph{Front. Math.} \textbf{19} (2024), 577--598.
\bibitem{QiuYan2026}
G.~Qiu and J.~Yan,
\newblock Interior curvature estimates for the graphical scalar curvature equation in all dimensions,
\newblock arXiv:2609.02581v2 [math.AP], 2026.
\bibitem{RenWang2019}
C.~Ren and Z.~Wang,
\newblock On the curvature estimates for Hessian equations,
\newblock \emph{Amer. J. Math.} \textbf{141} (2019), 1281--1315.
\bibitem{RenWang2023}
C.~Ren and Z.~Wang,
\newblock The global curvature estimate for the $n-2$ Hessian equation,
\newblock \emph{Calc. Var. Partial Differential Equations} \textbf{62} (2023), Paper No.~239.
\bibitem{ShankarYuan2020}
R.~Shankar and Y.~Yuan,
\newblock Hessian estimate for semiconvex solutions to the $\sigma_2$ equation,
\newblock \emph{Calc. Var. Partial Differential Equations} \textbf{59} (2020), Paper No.~30.
\bibitem{ShankarYuan2025}
R.~Shankar and Y.~Yuan,
\newblock Hessian estimates for the $\sigma_2$ equation in dimension four,
\newblock \emph{Ann. of Math.} \textbf{201} (2025), 489--513.
\bibitem{ShengTrudingerWang2004}
W.~Sheng, N.~S.~Trudinger, and X.-J.~Wang,
\newblock Convex hypersurfaces of prescribed Weingarten curvatures,
\newblock \emph{Comm. Anal. Geom.} \textbf{12} (2004), 213--232.
\bibitem{ShengUrbasWang2004}
W.~Sheng, J.~Urbas, and X.-J.~Wang,
\newblock Interior curvature bounds for a class of curvature equations,
\newblock \emph{Duke Math. J.} \textbf{123} (2004), 235--264.
\bibitem{SpruckXiao2017}
J.~Spruck and L.~Xiao,
\newblock A note on star-shaped compact hypersurfaces with prescribed scalar curvature in space forms,
\newblock \emph{Rev. Mat. Iberoam.} \textbf{33} (2017), 547--554.
\bibitem{Trudinger1990}
N.~S.~Trudinger,
\newblock The Dirichlet problem for the prescribed curvature equations,
\newblock \emph{Arch. Rational Mech. Anal.} \textbf{111} (1990), 153--179.
\bibitem{Urbas1990}
J.~I.~E.~Urbas,
\newblock On the existence of nonclassical solutions for two classes of fully nonlinear elliptic equations,
\newblock \emph{Indiana Univ. Math. J.} \textbf{39} (1990), 355--382.
\bibitem{Urbas2000}
J.~Urbas,
\newblock An interior curvature bound for hypersurfaces of prescribed $k$-th mean curvature,
\newblock \emph{J. Reine Angew. Math.} \textbf{519} (2000), 41--57.
\bibitem{WarrenYuan2009}
M.~Warren and Y.~Yuan,
\newblock Hessian estimates for the $\sigma_2$ equation in dimension three,
\newblock \emph{Comm. Pure Appl. Math.} \textbf{62} (2009), 305--321.
\bibitem{Yan2026}
J.~Yan,
\newblock Global curvature estimates for $\sigma_k$ curvature equations with $k\ge n/2$,
\newblock arXiv:2608.25665v1 [math.AP], 2026.
\end{thebibliography}
\end{document}